\documentclass[12pt]{article}

\usepackage{amsmath,amsthm,amssymb}
\usepackage{hyperref}

\usepackage{graphicx, enumerate, url}
\usepackage{color}
\usepackage{tikz}

\numberwithin{equation}{section}

\theoremstyle{plain}
\newtheorem{theorem}[equation]{Theorem}
\newtheorem{lemma}[equation]{Lemma}

\newtheorem{proposition}[equation]{Proposition}

\theoremstyle{definition}
\newtheorem{definition}[equation]{Definition}

\theoremstyle{remark}
\newtheorem{remark}[equation]{Remark}

\begin{document}

\title{Convex hulls of polynomial Julia sets}         
 \author{Ma{\l}gorzata~Stawiska\\Mathematical Reviews\\ 416 Fourth Street\\ Ann Arbor, MI 48103-4816, USA\\stawiska@umich.edu}

\maketitle


\begin{abstract} We prove P. Alexandersson's conjecture that for every complex polynomial $p$ of degree $d \geq 2$ the convex hull $H_p$ of the Julia set $J_p$ of $p$ satisfies $p^{-1}(H_p) \subset H_p$. We further prove that   the equality $p^{-1}(H_p) = H_p$ is achieved only if $p$ is affinely conjugated to the Chebyshev polynomial $T_d$ of degree $d$, to $-T_d$ or a monomial $c z^d$ with $|c|=1$.
\end{abstract}

\section{Introduction} 

Let $d$ be a positive integer greater than or equal to $2$ and let $a_0,a_1,...,a_d$ be complex numbers such that $a_d \neq 0$. Then $p(z)=a_dz^d+...+a_1z+a_0, z \in \mathbb{C}$  is a polynomial of degree $d$. In particular,

 \[
 \lim_{|z| \to \infty} \frac{|p(z)|}{|z|^d} >0.
 \]
 
  This  property  influences dynamics of $p$ in the complex plane. It implies that there exists an $R >0$ such that $p^{-1}(D_R) \subset D_R$, where $D_R:=\{z: |z|\leq R\}$.  Furthermore (cf. \cite{Kl}, Corollary 6), for any such  $R$ and for each positive integer $k_0$ we have 
  \[
 \emptyset \neq  K_p= \bigcap_{k \geq k_0} p^{-k}(D_R),
\]

where $K_p:=\{z \in \mathbb{C}: \{p^{\circ n}(z)\}\  \text{is bounded}\}$. We use the notation $p^{\circ n}$ to denote  the composition $p \circ ... \circ p$, $n$ times.\\

Considering $p$ as a holomorphic map of the Riemann sphere $\widehat{\mathbb{C}}$ onto itself (with $p(\infty)=\infty)$, we  define the Fatou set $F_p$ of $p$ and the Julia set $J_p$ of $p$ as follows (cf. Definition 3.1.3, \cite{Be}): $F_p$ is the maximal open subset of  $\widehat{\mathbb{C}}$ on which the sequence $\{p^{\circ n}, n \in \mathbb{N}\}$ is equicontinuous, and $J_p$ is the complement of $F_p$ in $\widehat{\mathbb{C}}$.  By Theorem 4.2.1 in \cite{Be} $J_p$ is infinite. For a general holomorphic (hence rational) map $r$ of  $\widehat{\mathbb{C}}$ Theorem 4.2.3 in \cite{Be} gives an alternative that either $\mathcal{J}_r=\widehat{\mathbb{C}}$  or $\mathcal{J}_r$ has empty interior. However, by Theorem 3.2.5 in \cite{Be}, for any polynomial  $p$ the fixed point $\infty$ belongs to the Fatou set, so $J_p$ always has empty interior. By $F_\infty$ we denote the connected component of  $F_p$ containing $\infty$.\\

It follows from the above definitions that $K_p$ is the complement of $F_\infty$ in $\mathbb{C}$ or, equivalently, the union of $J_p$ with bounded components of $F_p$. We call $K_p$ the filled-in Julia set of $p$. Using Montel's theorem it is easy to deduce from the above definitions that $J_p=\partial F_\infty =\partial K_p$.\\

By Theorem 3.2.4 in \cite{Be}, the Fatou set and the Julia set are completely invariant under $p$. That is, $p^{-1}(F_p)=F_p=p(F_p)$ and similarly $p^{-1}(J_p)=J_p=p(J_p)$. The above representation of $K_p$ shows that  $p^{-1}(K_p)=K_p=p(K_p)$. In view of these equalities, as well as the inclusion $p^{-1}(D_R) \subset D_R$, on which they rely, it is natural to ask about existence of   closed sets $E \subset \mathbb{C}$ (other than $J_p$, $K_p$ or $D_R$) containing $J_p$ such that $p^{-1}(E) \subset E$. In \cite{Al} P.~Alexandersson conjectured that $p^{-1}(H_p) \subset H_p$ for every complex polynomial $p$ of degree $d \geq 2$, where $H_p={\rm conv}J_p$ is the convex hull of the Julia set of $p$. He presented graphical evidence in support of this conjecture in some cases of quadratic and cubic polynomials. We settled his conjecture positively; see \cite{St} for a preliminary version.\\

In this article we present a detailed proof of  Alexandersson's conjecture that $p^{-1}(H_p) \subset H_p$ for every complex polynomial $p$ of degree $d \geq 2$ (Theorem \ref{theorem: main}). In addition we characterize the polynomials $p$ for which  the equality $p^{-1}(H_p) = H_p$ is achieved (Theorem \ref{theorem: conjcheb} and Theorem \ref{theorem: conjpower}).\\

Let us first mention some simple  examples of polynomials $p$ with $H_p=p^{-1}(H_p)$.  For $p(z)=T_2(z)=2z^2-1$, the Julia set $J_p$ is the real axis segment $[-1,1]$, so  $H_p=J_p$ and of course $H_p=p^{-1}(H_p)$. The same holds  for any $d > 2$ and the Chebyshev polynomial $T_d$ of degree $d$,  $T_d(\cos z)=\cos (dz)$. It is also possible to give an example with $J_p \subsetneq H_p$: for any $d \geq 2$, $|\alpha|=1$ and  $p(z)=\alpha z^d$, we have $J_p=\{|z|=1\}$, $H_p=\{|z|\leq 1\}$ and $H_p=p^{-1}(H_p)$. We will show that, up to a conjugacy by an affine map $g(z)=az+b$, these are the only examples achieving the equality $p^{-1}(H_p) = H_p$. \\

We will use the following:

\begin{proposition} (Theorem 1.4.1, \cite{Be})  \label{prop: cheb} Let $p$ be a polynomial of degree $d \geq2$. Then the real interval $[-1,1]$ is both forward and backward invariant under $p$ if and only if $p$ is $T_d$ or $-T_d$, where $T_d$ is the Chebyshev polynomial of degree $d$.
\end{proposition}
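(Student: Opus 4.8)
The plan is to treat the hypothesis that $[-1,1]$ is both forward and backward invariant as the single condition $p^{-1}([-1,1]) = [-1,1]$: forward invariance $p([-1,1]) \subseteq [-1,1]$ gives $[-1,1] \subseteq p^{-1}([-1,1])$, while backward invariance gives the reverse inclusion. Writing $\Omega := \widehat{\mathbb{C}} \setminus [-1,1]$, this says precisely that $p^{-1}(\Omega) = \Omega$, so the restriction $p \colon \Omega \to \Omega$ is a proper holomorphic map, and since $p$ has a pole of order $d$ at the fixed point $\infty \in \Omega$, this proper map has degree $d$. The reverse implication of the proposition is immediate from the semiconjugacy $T_d(\tfrac12(w + w^{-1})) = \tfrac12(w^d + w^{-d})$, which shows that $T_d$, and hence $-T_d$, maps $[-1,1]$ onto itself with no other preimages; so the substance lies entirely in the forward implication.

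First I would uniformize $\Omega$. The Joukowski map $\psi(w) = \tfrac12(w + w^{-1})$ is a conformal isomorphism of the exterior disk $\{|w| > 1\}$ onto $\Omega$ carrying $\infty$ to $\infty$, and I would form the conjugate $q := \psi^{-1} \circ p \circ \psi$, a proper holomorphic self-map of $\{|w| > 1\}$ of degree $d$. Since $\psi(w) \sim w/2$ near $\infty$ and $p$ is totally ramified over $\infty$, the map $q$ fixes $\infty$ with local degree $d$; as $\deg q = d$, the point $\infty$ is the only preimage of $\infty$ under $q$.

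Next I would classify $q$. Passing through the inversion $\iota(w) = 1/w$, the map $\iota \circ q \circ \iota$ is a proper holomorphic self-map of the unit disk, hence a finite Blaschke product of degree $d$; the total ramification of $q$ over $\infty$ becomes a single zero of order $d$ at the origin, forcing this Blaschke product to be $\lambda w^d$ with $|\lambda| = 1$. Undoing the inversion gives $q(w) = \mu w^d$ with $|\mu| = 1$. Reconstructing $p$ from the relation $p \circ \psi = \psi \circ q$ then yields, by analytic continuation from $\{|w| > 1\}$, the identity $p(\psi(w)) = \psi(\mu w^d) = \tfrac12(\mu w^d + \bar\mu w^{-d})$ valid for all $w \neq 0$.

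Finally I would exploit the symmetry $\psi(w) = \psi(1/w)$: applying it to the displayed identity forces $\mu w^d + \bar\mu w^{-d} = \mu w^{-d} + \bar\mu w^d$ identically in $w$, whence $\mu = \bar\mu$ and so $\mu = \pm 1$. The value $\mu = 1$ gives $p(\psi(w)) = \tfrac12(w^d + w^{-d}) = T_d(\psi(w))$, i.e.\ $p = T_d$, and $\mu = -1$ gives $p = -T_d$. I expect the main obstacle to be the rigorous justification that $q$ is genuinely proper and that the Schwarz-reflection and Blaschke-product identification is valid up to and across the boundary circle $\{|w| = 1\}$ (equivalently, controlling the behaviour of $p$ on $[-1,1]$ together with its ramification data there); once $q(w) = \mu w^d$ has been established, the remaining symmetry argument pinning down $\mu \in \{-1, 1\}$ is routine.
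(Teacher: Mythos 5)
The paper offers no proof of this proposition at all: it is quoted directly from Beardon (Theorem 1.4.1 in \cite{Be}), so there is no internal argument to compare yours against, and the relevant question is simply whether your blind proof stands on its own. It does. The reduction of the two invariance hypotheses to the single identity $p^{-1}([-1,1])=[-1,1]$, hence $p^{-1}(\Omega)=\Omega$ for $\Omega=\widehat{\mathbb{C}}\setminus[-1,1]$, is sound, and it does make $p\colon\Omega\to\Omega$ proper (the preimage of a compact subset of $\Omega$ under the rational map $p$ is compact in $\widehat{\mathbb{C}}$ and, by complete invariance, lies in $\Omega$), with degree $d$ correctly read off at the totally ramified fixed point $\infty$. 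Conjugating by the Joukowski isomorphism $\psi$ and then by $w\mapsto 1/w$ produces a proper holomorphic self-map of the unit disk whose full preimage of $0$ is the origin with multiplicity $d$; since proper self-maps of the disk are finite Blaschke products, that map is $\lambda w^{d}$, giving $q(w)=\mu w^{d}$ with $|\mu|=1$, and your analytic continuation of $p\circ\psi=\psi\circ q$ to $\mathbb{C}\setminus\{0\}$ together with the symmetry $\psi(w)=\psi(1/w)$ correctly forces $\mu=\pm1$, i.e.\ $p=\pm T_{d}$. Two remarks. First, the obstacle you flag at the end is not actually there: the Blaschke-product classification of proper self-maps of the disk requires no boundary regularity whatsoever, so nothing needs to be reflected or continued across $\{|w|=1\}$, and no control of $p$ on $[-1,1]$ itself is needed. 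Second, the converse direction deserves one more line than you give it: from $T_{d}(\psi(w))=\psi(w^{d})$ and the fact that $\psi(v)\in[-1,1]$ if and only if $|v|=1$, one gets $T_{d}^{-1}([-1,1])=[-1,1]$ at once, and likewise for $-T_{d}$. Your route --- uniformize the complement of the interval and reduce to the monomial case --- is the classical one and is in the same spirit as Beardon's own treatment, which ties this statement via the same semiconjugacy to the unit-circle result that the paper records as Proposition \ref{prop: powers}.
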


\begin{proposition}  (Theorem 1.3.1, \cite{Be}) \label{prop: powers} Let $p$ be a polynomial of degree $d \geq2$ and suppose that the unit circle $\{|z|=1\}$ is both forward and backward invariant under $p$. Then $p(z)=c  z^d$, where $|c|=1$. 
\end{proposition}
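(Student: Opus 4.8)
The plan is to convert the invariance hypotheses into a pointwise modulus condition on the unit circle and then exploit a reflection (reciprocal polynomial) trick that turns that condition into a polynomial identity. First I would record what the hypotheses say concretely. Forward invariance, $p(\{|z|=1\}) \subseteq \{|z|=1\}$, means precisely that $|p(z)|=1$ whenever $|z|=1$; in particular $p$ has no zeros on the circle. Backward invariance, $p^{-1}(\{|z|=1\}) \subseteq \{|z|=1\}$, says that any point with $|p(z)|=1$ already lies on the circle, so together the two give $p^{-1}(\{|z|=1\})=\{|z|=1\}$. I will carry the modulus identity $|p(z)|=1$ on $\{|z|=1\}$ as the working hypothesis.

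Next I would introduce the reciprocal polynomial $p^{*}(z):=z^{d}\,\overline{p(1/\bar z)}$. Writing $p(z)=\sum_{k=0}^{d}a_{k}z^{k}$, a direct computation gives $p^{*}(z)=\sum_{k=0}^{d}\overline{a_{k}}\,z^{d-k}$, so that $p^{*}$ is again a genuine polynomial of degree at most $d$ (holomorphic on all of $\mathbb{C}$, not merely off the origin). On the circle $\{|z|=1\}$ one has $\bar z=1/z$, hence $\overline{p(1/\bar z)}=\overline{p(z)}$ and therefore $p^{*}(z)=z^{d}\,\overline{p(z)}$ there. Combining this with $|p(z)|^{2}=p(z)\overline{p(z)}=1$ on the circle yields the relation $p(z)\,p^{*}(z)=z^{d}$ for every $z$ with $|z|=1$.

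Now the two sides of $p(z)\,p^{*}(z)=z^{d}$ are polynomials agreeing on the infinite set $\{|z|=1\}$, so by the identity theorem they coincide identically in $\mathbb{C}[z]$. Comparing degrees forces $\deg p^{*}=0$, i.e.\ $a_{0}=\cdots=a_{d-1}=0$, so $p(z)=a_{d}z^{d}$; equivalently, since $\mathbb{C}[z]$ is a unique factorization domain and $z^{d}$ has its only root at the origin, all $d$ roots of $p$ must be $0$. Substituting $p(z)=a_{d}z^{d}$ back into the identity gives $|a_{d}|^{2}z^{d}=z^{d}$, whence $|a_{d}|=1$, and the proposition follows with $c=a_{d}$.

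I do not expect a serious obstacle here: the only delicate point is the passage to the reflected polynomial $p^{*}$, where one must verify that it is a polynomial and that the circle identity is legitimate, the latter resting on the absence of zeros of $p$ on $\{|z|=1\}$ guaranteed by forward invariance. This bookkeeping, rather than any hard analysis, is the main thing to get right. One could instead argue that complete invariance makes $p$ restrict to a proper degree-$d$ self-map of the unit disc, hence a finite Blaschke product that, being a polynomial, must be $cz^{d}$; but the reflection argument is self-contained and avoids citing the Blaschke classification. I would also note as an aside that the computation uses only the modulus identity, so forward invariance of the circle alone already forces $p(z)=cz^{d}$, with backward invariance then automatic.
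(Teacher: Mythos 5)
Your proof is correct, and a comparison with ``the paper's proof'' is necessarily a comparison with the literature: the paper states this proposition without argument, quoting it as Theorem 1.3.1 of Beardon's book \cite{Be}, so your reflection argument is a self-contained substitute for that citation rather than a variant of an in-text proof. Beardon's own treatment is function-theoretic (complete invariance plus the minimum/maximum modulus principle to locate all zeros of $p$ in the open unit disc, then an argument in the spirit of the Schwarz lemma), whereas your route is essentially algebraic: the identity $p(z)\,p^{*}(z)=z^{d}$ on $\{|z|=1\}$, with $p^{*}(z)=z^{d}\,\overline{p(1/\bar z)}=\sum_{k}\overline{a_{k}}z^{d-k}$, promotes to an identity in $\mathbb{C}[z]$ by the identity theorem, and degree counting (or unique factorization) finishes it. What your approach buys is a genuine strengthening, which you correctly note in your aside: only the modulus condition $|p|=1$ on the circle --- i.e.\ forward invariance alone --- is used, so for polynomials the backward-invariance hypothesis is redundant; Beardon's statement assumes both because it is phrased for the completely invariant setting used elsewhere in his book (and, notably, the companion Proposition \ref{prop: cheb} on $[-1,1]$ genuinely needs both directions). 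One small quibble: the ``delicate point'' you flag is even less delicate than you suggest --- the identity $p^{*}(z)=z^{d}\,\overline{p(z)}$ on the circle requires nothing about zeros of $p$ (only $z\neq 0$ there, and $p^{*}$ is a polynomial everywhere by the coefficient formula), so the zero-freeness supplied by forward invariance, while true, is never actually used in your argument.
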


\begin{proposition} (\cite{Be}, Theorem 3.1.4): \label{prop: moebius} Let $r$ be a non-constant rational map, let $g$ be a M\"obius map,  $g(z)=\frac{\alpha z+\beta}{\gamma z+ \delta}$, and let $s=g\circ r \circ g^{-1}$. Then $\mathcal{F}_s=g(\mathcal{F}_r)$ and  $\mathcal{J}_s=g(\mathcal{J}_r)$. 
\end{proposition}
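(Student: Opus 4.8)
The plan is to reduce everything to the single algebraic identity that conjugation commutes with iteration, and then to observe that equicontinuity with respect to the chordal (spherical) metric $\sigma$ on $\widehat{\mathbb{C}}$ is preserved under conjugation by a homeomorphism. First I would prove by induction on $n$ that $s^{\circ n} = g \circ r^{\circ n} \circ g^{-1}$ for every $n \ge 1$: the base case is the definition of $s$, and in the inductive step the inner pair $g^{-1}\circ g$ telescopes, so $s^{\circ(n+1)} = s \circ s^{\circ n} = (g\circ r \circ g^{-1})\circ(g\circ r^{\circ n}\circ g^{-1}) = g\circ r^{\circ(n+1)}\circ g^{-1}$. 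Thus the family $\{s^{\circ n}\}$ is, term by term, the family $\{r^{\circ n}\}$ pre- and post-composed with the fixed M\"obius maps $g^{-1}$ and $g$.

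Next I would record that, since $g$ and $g^{-1}$ are continuous self-maps of the compact metric space $(\widehat{\mathbb{C}}, \sigma)$, they are uniformly continuous. I would then show that $\{r^{\circ n}\}$ is equicontinuous at a point $w_0$ if and only if $\{s^{\circ n}\}$ is equicontinuous at $g(w_0)$. For the forward direction, given $\varepsilon>0$ I would choose $\varepsilon_1$ from uniform continuity of $g$ (so that $\sigma(a,b)<\varepsilon_1$ forces $\sigma(g(a),g(b))<\varepsilon$), then $\delta_1$ from equicontinuity of $\{r^{\circ n}\}$ at $w_0$ at level $\varepsilon_1$, and finally $\delta$ from uniform continuity of $g^{-1}$, producing $\sigma(g^{-1}(v), g^{-1}(v_0))<\delta_1$ whenever $\sigma(v,v_0)<\delta$, where $v_0 = g(w_0)$. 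Chaining these through the identity $s^{\circ n} = g\circ r^{\circ n}\circ g^{-1}$ gives $\sigma(s^{\circ n}(v), s^{\circ n}(v_0))<\varepsilon$ for all $n$, as required. The reverse direction is identical after replacing $(r,g)$ by $(s,g^{-1})$, since $r = g^{-1}\circ s \circ g$ and $g^{-1}$ is again a M\"obius map.

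Because $g$ is a homeomorphism of $\widehat{\mathbb{C}}$, it carries open sets to open sets, so the pointwise equivalence above upgrades to $g(\mathcal{F}_r) = \mathcal{F}_s$: the set $g(\mathcal{F}_r)$ is open, the iterates $\{s^{\circ n}\}$ are equicontinuous precisely on it, and maximality transfers along the bijection $g$. Taking complements in $\widehat{\mathbb{C}}$ and using that $g$ is a bijection then yields $\mathcal{J}_s = \widehat{\mathbb{C}}\setminus \mathcal{F}_s = g(\widehat{\mathbb{C}}\setminus \mathcal{F}_r) = g(\mathcal{J}_r)$. The only genuinely delicate point, and the step I would be most careful about, is the uniform continuity of $g$ and $g^{-1}$: a M\"obius map need not be uniformly continuous for the Euclidean metric on $\mathbb{C}$, and it is precisely the compactness of $\widehat{\mathbb{C}}$ together with the use of the chordal metric that makes the $\varepsilon$-$\delta$ transfer of equicontinuity go through uniformly in $n$.
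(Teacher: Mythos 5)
Your proof is correct. Note, however, that the paper offers no proof of this statement at all: it is quoted verbatim as a known result, namely Theorem 3.1.4 of Beardon's book \cite{Be}, and used as a black box. Your argument --- the identity $s^{\circ n} = g\circ r^{\circ n}\circ g^{-1}$, the transfer of pointwise equicontinuity via uniform continuity of $g$ and $g^{-1}$ on the compact space $(\widehat{\mathbb{C}},\sigma)$, and the passage to the maximal open set and its complement through the homeomorphism $g$ --- is essentially the standard textbook proof, and you correctly identify the one genuinely delicate point: the whole scheme works only because the chordal metric on the compact sphere makes M\"obius maps uniformly continuous, which fails for the Euclidean metric on $\mathbb{C}$. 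So your proposal fills in, correctly and completely, exactly the details the paper delegates to its reference.
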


 
  %
      
  \section{A primer on convex sets} 
  
  Let us recall some basic definitions and theorems about convexity in vector spaces over $\mathbb{R}$. We will use these in $\mathbb{C}=\mathbb{R}+i\mathbb{R}$.
  
\begin{definition}  (\cite{Ho2}, Definition 2.1.1; \cite{Ba}, p. 5): Let $V$ be a vector space over $\mathbb{R}$. A subset $X$ of $V$ is
called convex if $tx_1 + (1-t)x_2 \in X$ whenever $0\leq t \leq 1$ and $x_1,x_2 \in X$.
\end{definition} 
  
    \begin{definition} (cf. Proposition 2.1.3, \cite{Ho2} or \cite{Ba}, Theorem (I.2.1)) Let $V$ be a vector space over $\mathbb{R}$.  For every subset $X \subset V$ the intersection ${\rm conv} X$ 
of all convex sets containing $X$ is  a convex set, called the convex
hull of $X$.  
 \end{definition}  
 
 It follows that for any two sets $X_1$, $X_2$ in $V$, if  $X_1 \subset X_2$, then ${\rm conv} X_1 \subset {\rm conv} X_2$. Furthermore, a set is convex if and only if it equals its convex hull.\\ 
 
 Convexity is invariant under affine maps:
 
\begin{proposition} (\cite{Ho2}, Proposition 2.1.2): If $T$ is an affine map $V_1 \to  V_2$ where $V_j$ are vector
spaces, and $X_j$ is a convex subset of $V_j$, then $TX_1 = \{Tx: x \in X_1\}$ and
$T^{-1}X_2 = \{x \in V_1: Tx \in X_2\}$ are convex subsets of $V_2$ and $V_1$, respectively.
\end{proposition}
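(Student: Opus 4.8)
The plan is to reduce everything to the single defining feature of an affine map $T$, namely that it respects affine combinations: writing $T(x) = L(x) + c$ with $L \colon V_1 \to V_2$ linear and $c \in V_2$, one checks directly that $T\bigl(t x_1 + (1-t) x_2\bigr) = t\,T(x_1) + (1-t)\,T(x_2)$ for every $t \in \mathbb{R}$ and all $x_1, x_2 \in V_1$, since the two constant contributions recombine as $\bigl(t + (1-t)\bigr)c = c$. First I would record this identity, as both halves of the statement rest on it.

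For the forward image $TX_1$, I would take two points $y_1, y_2 \in TX_1$, say $y_j = T(x_j)$ with $x_j \in X_1$, together with a parameter $0 \le t \le 1$. Applying the identity gives $t y_1 + (1-t) y_2 = T\bigl(t x_1 + (1-t) x_2\bigr)$. Since $X_1$ is convex, $t x_1 + (1-t) x_2 \in X_1$, so the right-hand side lies in $TX_1$; hence $TX_1$ is convex.

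For the preimage $T^{-1}X_2$, I would take $x_1, x_2 \in T^{-1}X_2$, so that $T(x_1), T(x_2) \in X_2$, and again a parameter $0 \le t \le 1$. The same identity yields $T\bigl(t x_1 + (1-t) x_2\bigr) = t\,T(x_1) + (1-t)\,T(x_2)$, which belongs to $X_2$ because $X_2$ is convex. Therefore $t x_1 + (1-t) x_2 \in T^{-1}X_2$, which proves convexity of the preimage.

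There is essentially no genuine obstacle here: the entire content is the interchange of $T$ with convex combinations, and the only point requiring a moment's care is confirming that this interchange is exactly what the affine hypothesis supplies. A merely linear map would preserve all linear combinations, whereas affineness is precisely what is needed to cover the non-homogeneous case while still handling convex (hence affine) combinations, on account of the coefficients summing to $1$. Both inclusions then follow immediately.
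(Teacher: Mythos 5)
Your proof is correct and complete: the identity $T\bigl(tx_1+(1-t)x_2\bigr)=tT(x_1)+(1-t)T(x_2)$, valid because the coefficients sum to $1$, is exactly what is needed, and both the image and preimage arguments follow cleanly from it. Note that the paper itself offers no proof of this proposition---it is quoted verbatim from H\"ormander's \emph{Notions of Convexity} as a known fact---so there is no in-paper argument to compare against; your write-up is the standard argument that the cited source gives, and it would serve as a self-contained substitute for the citation.
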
 

 We will use the following version of the hyperplane separation theorem:
 
\begin{proposition} ( \cite{Ho2},  Corollary 2.1.11; \cite{Ba}, Theorem (III.1.3)) \label{lemma: separation}: Let $X$ be a convex and closed subset of a finite-dimensional
vector space $V$. If $x_0 \not \in X$, then  there is an affine half-space containing $x_0$ which does not intersect $X$; that is, there is an affine  function $f: V \to \mathbb{R}$ with $f(x_0) < 0 \leq f(x), \ x \in X$. 
  \end{proposition}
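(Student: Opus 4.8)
The plan is to equip $V$ with a Euclidean structure and to build the separating function from the point of $X$ closest to $x_0$. Since the case $X = \emptyset$ is trivial (any affine function separates), I would assume $X \neq \emptyset$. As $V$ is finite-dimensional, I would fix an inner product $\langle \cdot,\cdot\rangle$ on $V$ (for example, by declaring some basis orthonormal), with induced norm $\|\cdot\|$; this turns $V$ into a Euclidean space, in which closed and bounded sets are compact.

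First I would produce a nearest point. Picking any $z \in X$ and writing $r = \|z - x_0\|$, the sublevel set $X \cap \{y : \|y - x_0\| \le r\}$ is closed and bounded, hence compact, so the continuous map $y \mapsto \|y - x_0\|$ attains its infimum over $X$ at some $y_0 \in X$. Because $X$ is closed and $x_0 \notin X$, the vector $v := x_0 - y_0$ is nonzero.

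The heart of the argument is a variational inequality extracted from convexity. For a fixed $x \in X$, convexity places the whole segment $y_t := y_0 + t(x - y_0)$ in $X$ for $t \in [0,1]$, so minimality of $y_0$ gives $\|y_t - x_0\|^2 \ge \|y_0 - x_0\|^2$. Expanding the left-hand side and cancelling the common constant term leaves $2t\,\langle y_0 - x_0,\, x - y_0\rangle + t^2\|x - y_0\|^2 \ge 0$ for all $t \in (0,1]$; dividing by $t$ and letting $t \to 0^+$ yields $\langle y_0 - x_0,\, x - y_0\rangle \ge 0$, that is, $\langle v,\, x - y_0\rangle \le 0$ for every $x \in X$.

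Finally I would read off the affine function $f(x) := \langle v,\, y_0 - x\rangle = \langle v, y_0\rangle - \langle v, x\rangle$. The variational inequality gives $f(x) = -\langle v,\, x - y_0\rangle \ge 0$ for all $x \in X$, whereas $f(x_0) = -\langle v,\, x_0 - y_0\rangle = -\|v\|^2 < 0$ since $v \neq 0$; this is exactly the desired $f(x_0) < 0 \le f(x)$ on $X$. I expect the variational inequality to be the main obstacle: it is the sole place where convexity is used, and one must take the one-sided limit $t \to 0^+$ (rather than a two-sided derivative, which is unavailable because $x_0$ need not be interior to any segment through $y_0$) in order to obtain an inequality of the correct sign instead of an equality.
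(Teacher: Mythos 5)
Your proof is correct and complete: the compactness argument producing the nearest point $y_0$, the variational inequality obtained via the one-sided limit $t \to 0^+$ (correctly one-sided, since $t < 0$ is unavailable), and the verification $f(x_0) = -\|v\|^2 < 0 \le f(x)$ are all sound, and you even handle the trivial case $X = \emptyset$. The paper states this proposition with citations only (H\"ormander, Barvinok) and supplies no proof of its own, and your nearest-point projection argument is exactly the standard finite-dimensional proof given in those references, so there is nothing to bridge.
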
   
  
 Some other useful properties  of convex sets in $\mathbb{R}^n$   are the following:    
                
\begin{proposition} (\cite{Ba}, Theorem (III.2.5): If $X \subset \mathbb{R}^n$ is convex, then the interior of $X$ is also convex.
 \end{proposition}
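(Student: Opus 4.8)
The plan is to argue directly from the definition of convexity, reducing the claim to the statement that whenever $x_1,x_2\in\operatorname{int}X$ and $0\le t\le 1$, the point $x_t:=tx_1+(1-t)x_2$ again lies in $\operatorname{int}X$. Since $x_1$ and $x_2$ are interior points of $X$, I would first fix open balls $B(x_1,r_1)\subset X$ and $B(x_2,r_2)\subset X$ with $r_1,r_2>0$, and set $r:=\min(r_1,r_2)$. The goal then becomes to exhibit a single ball $B(x_t,r)$ entirely contained in $X$, which is precisely the assertion that $x_t$ is interior.

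The key step is a translation trick. Given an arbitrary $y\in B(x_t,r)$, I would express $y$ as a convex combination of a point near $x_1$ and a point near $x_2$ by translating both centers by the \emph{same} vector $y-x_t$: put $y_1:=x_1+(y-x_t)$ and $y_2:=x_2+(y-x_t)$. A one-line computation gives $ty_1+(1-t)y_2=x_t+(y-x_t)=y$, while $\|y_j-x_j\|=\|y-x_t\|<r\le r_j$, so $y_1\in B(x_1,r_1)\subset X$ and $y_2\in B(x_2,r_2)\subset X$. Convexity of $X$ now yields $y=ty_1+(1-t)y_2\in X$. As $y\in B(x_t,r)$ was arbitrary, we obtain $B(x_t,r)\subset X$, hence $x_t\in\operatorname{int}X$, as desired.

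This argument is essentially routine, so I do not anticipate a genuine obstacle; the only points requiring care are the uniform choice $r=\min(r_1,r_2)$, which guarantees that the common translation keeps \emph{both} perturbed points inside their respective balls, and the observation that the same vector $y-x_t$ must be added to each center in order for the convex combination to collapse back to $y$. The boundary cases $t\in\{0,1\}$ need no separate treatment, since then $x_t$ coincides with $x_2$ or $x_1$ and the ball $B(x_t,r)\subset X$ directly. One could alternatively establish the claim first for the midpoint $t=\tfrac12$ and then bootstrap to all dyadic, and by continuity all, values of $t$; but the direct argument above avoids any induction and is cleaner.
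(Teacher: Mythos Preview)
Your argument is correct and is the standard direct proof: the ``translation trick'' showing $B(x_t,r)\subset X$ with $r=\min(r_1,r_2)$ is exactly right, and the verification $ty_1+(1-t)y_2=y$ together with $\|y_j-x_j\|<r$ is clean.

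There is nothing to compare against, however: the paper does not supply its own proof of this proposition. It is stated as a quoted result (Theorem~(III.2.5) in Barvinok's \emph{A Course in Convexity}) and used without argument. So your write-up is not duplicating or diverging from anything in the paper; it simply fills in a proof the author chose to cite rather than reproduce.
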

 
 \begin{proposition}
(\cite{Ho2}, Theorem 2.15; \cite{Ba}, Corollary (I.2.4)): If $K \subset \mathbb{R}^n$ is compact, then ${\rm conv } K$ is also compact.
 \end{proposition}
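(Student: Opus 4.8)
The plan is to exhibit $\mathrm{conv}\,K$ as the continuous image of a compact set; the essential tool is Carath\'eodory's theorem, which supplies a \emph{uniform} bound on the number of points needed to express any element of the convex hull as a convex combination. Once such a bound is available, the parametrizing set becomes a finite product of compact sets, and compactness of $\mathrm{conv}\,K$ follows from the fact that a continuous image of a compact set is compact.

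First I would establish Carath\'eodory's theorem: every point $x \in \mathrm{conv}\,K$ can be written as $x = \sum_{i=0}^{n} t_i x_i$ with $x_i \in K$, $t_i \geq 0$, and $\sum_{i=0}^n t_i = 1$. Starting from an arbitrary representation $x = \sum_{i=1}^{m} t_i x_i$ with $m > n+1$ and all $t_i > 0$, the points $x_1,\dots,x_m$ are affinely dependent in $\mathbb{R}^n$, so there exist scalars $\lambda_1,\dots,\lambda_m$, not all zero, with $\sum_i \lambda_i = 0$ and $\sum_i \lambda_i x_i = 0$. For each real $s$ one then has $x = \sum_i (t_i - s\lambda_i)x_i$ with $\sum_i (t_i - s\lambda_i) = 1$; choosing $s = \min\{t_i/\lambda_i : \lambda_i > 0\}$ keeps all coefficients nonnegative while forcing at least one of them to vanish, thereby reducing the number of active points by one. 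Iterating, one reaches a representation using at most $n+1$ points of $K$.

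With this bound in hand, let $\Delta = \{(t_0,\dots,t_n) \in \mathbb{R}^{n+1} : t_i \geq 0,\ \sum_{i=0}^n t_i = 1\}$ be the standard simplex and define
\[
\Phi \colon \Delta \times K^{n+1} \to \mathbb{R}^n, \qquad \Phi\bigl((t_i),(x_i)\bigr) = \sum_{i=0}^{n} t_i x_i .
\]
The map $\Phi$ is continuous, and its domain is compact: $\Delta$ is closed and bounded, $K$ is compact by hypothesis, and a finite product of compact sets is compact. Every value of $\Phi$ is a convex combination of points of $K$, so $\Phi(\Delta \times K^{n+1}) \subset \mathrm{conv}\,K$ since $\mathrm{conv}\,K$ is convex and contains $K$; conversely, Carath\'eodory's theorem guarantees that every point of $\mathrm{conv}\,K$ lies in the image. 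Hence $\mathrm{conv}\,K = \Phi(\Delta \times K^{n+1})$ is the continuous image of a compact set, and is therefore itself compact.

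The one genuinely non-trivial step is the uniform bound $n+1$ coming from Carath\'eodory's theorem. Without it the natural parametrizing domain would involve convex combinations of arbitrarily many points and would fail to be compact, so that the continuous-image argument would yield only boundedness of $\mathrm{conv}\,K$; closedness would then require a separate and more delicate argument. Everything else — continuity of $\Phi$, compactness of the product domain, and the two inclusions identifying the image with $\mathrm{conv}\,K$ — is routine.
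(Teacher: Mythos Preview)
The paper does not actually give a proof of this proposition; it simply records the statement with references to H\"ormander and Barvinok, so there is nothing to compare your argument against at the level of strategy. Your proof is correct and is in fact the standard one found in those sources: invoke Carath\'eodory's theorem to bound the number of points in a convex combination by $n+1$, then realize $\mathrm{conv}\,K$ as the image of the compact set $\Delta \times K^{n+1}$ under the continuous map $\Phi$. The reduction step in Carath\'eodory is fine as written (since $\sum_i \lambda_i = 0$ with the $\lambda_i$ not all zero guarantees some $\lambda_i > 0$), and your closing remark correctly identifies why the uniform bound is the crux.
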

 
 The following topological property of convex sets will also play an important role.
 
 \begin{proposition} \label{prop: boundary}The boundary of a convex set $X$ with nonempty interior in $\mathbb{R}^n$ is homeomorphic to the unit sphere.
 \end{proposition}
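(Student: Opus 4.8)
The plan is to realize $\partial X$ as the radial image of the unit sphere. I first note that, for the statement to hold, $X$ must be bounded: an unbounded convex set such as a half-space has a boundary homeomorphic to a hyperplane, not a sphere. In the situation of interest the relevant convex set is $H_p = \mathrm{conv}\, J_p$, which is compact, so I take $X$ to be a bounded (equivalently compact) convex subset of $\mathbb{R}^n$ with nonempty interior. After a translation, which is an affine homeomorphism preserving convexity and carrying boundary to boundary, I may assume $0 \in \mathrm{int}\,X$, so that there are $0 < \varepsilon \le M$ with the Euclidean balls satisfying $B(0,\varepsilon) \subset X \subset B(0,M)$.

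The central step is to show that every ray emanating from the origin meets $\partial X$ in exactly one point. Fix $u \in S^{n-1}$ and set $r(u) = \sup\{t \ge 0 : tu \in X\}$; by boundedness $r(u) \le M < \infty$, and by closedness $r(u)u \in X$. The essential convexity input is the segment lemma: if $a \in \mathrm{int}\,X$ and $b \in X$, then $(1-t)a + tb \in \mathrm{int}\,X$ for every $0 \le t < 1$. Applying this with $a = 0$ and $b = r(u)u$ shows $tu \in \mathrm{int}\,X$ for $0 \le t < r(u)$, while $tu \notin X$ for $t > r(u)$ by definition of the supremum; hence the ray meets the boundary precisely at $r(u)u$, and this point is genuinely in $\partial X$. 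Since $0 \in \mathrm{int}\,X$ is not a boundary point, this establishes a bijection $\rho : S^{n-1} \to \partial X$, $\rho(u) = r(u)u$, with inverse $x \mapsto x/|x|$.

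It remains to verify that $\rho$ is a homeomorphism. I would do this through the Minkowski gauge $\mu_X(x) = \inf\{t > 0 : x/t \in X\}$, which satisfies $r(u) = 1/\mu_X(u)$ on $S^{n-1}$ and $\partial X = \{x : \mu_X(x) = 1\}$. Convexity makes $\mu_X$ subadditive and positively homogeneous, and the sandwich $B(0,\varepsilon) \subset X \subset B(0,M)$ yields $|x|/M \le \mu_X(x) \le |x|/\varepsilon$; these bounds together with sublinearity give that $\mu_X$ is (Lipschitz) continuous, hence so is $r$ and therefore $\rho$. As $S^{n-1}$ is compact and $\partial X \subset \mathbb{R}^n$ is Hausdorff, the continuous bijection $\rho$ is automatically a homeomorphism, completing the proof.

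The main obstacle I anticipate is not the topological bookkeeping but the careful, quantitative use of convexity: proving the segment lemma and the continuity of $\mu_X$. Both rest on the same geometric fact that an interior point can be surrounded by a small ball lying inside $X$, which, combined with the convex combination defining $X$, propagates interiority along segments and controls the gauge. Everything else --- bijectivity, and the compactness--Hausdorff upgrade of a continuous bijection to a homeomorphism --- is then routine.
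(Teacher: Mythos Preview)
Your proposal is correct and follows essentially the same approach as the paper: the paper also translates so that $0$ lies in the interior and then uses the radial map $x \mapsto x/\|x\|$ from $\partial X$ to $S^{n-1}$, merely asserting it is a homeomorphism and deferring details to an external reference, whereas you supply those details via the segment lemma, the Minkowski gauge, and the compact--Hausdorff argument. Your observation that boundedness of $X$ is needed is well taken; the paper's statement omits this hypothesis, though in its only application (to $H_p = \mathrm{conv}\,J_p$, which is compact) it is satisfied.
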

 
 \begin{proof} It can be assumed that the origin $0$ is in the interior of $X$. The map $\varphi: \partial X \ni x \mapsto x/\|x\| \in \mathbb{S}^{n-1}$ is a homeomorphism.
 For further details, see Theorem 2.1, \cite{nlab}.
 \end{proof}
 
 A relation between convex sets and behavior of complex polynomials in $\mathbb{C}$ is given by the classical Gauss-Lucas theorem:

  \begin{theorem} (\cite{RS}, Theorem 2.1.1; \cite{CGOT}, Theorem 1.1; \cite{Ba}, problem 5, p.8; \cite{Ho2}, Exercise 2.1.14): Every convex set in the complex plane containing all the zeros of a complex polynomial $p$ also contains all critical points of $p$.
  \end{theorem}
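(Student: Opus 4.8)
The plan is to reduce the statement to the equivalent assertion that every critical point of $p$ lies in the convex hull of the zeros of $p$. Indeed, if $Z$ denotes the set of zeros of $p$, then ${\rm conv}\,Z$ is by definition the smallest convex set containing $Z$, so any convex set $X \supset Z$ automatically satisfies $X \supset {\rm conv}\,Z$. Hence, once I know that every critical point belongs to ${\rm conv}\,Z$, it belongs to $X$ as well. So it suffices to show that each $w$ with $p'(w)=0$ lies in ${\rm conv}\,Z$.

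First I would write $p(z) = a_d \prod_{j=1}^{d}(z - z_j)$ with $a_d \neq 0$ and $z_1,\dots,z_d$ the zeros listed with multiplicity. If a critical point $w$ happens to be a zero of $p$, then $w \in Z \subset {\rm conv}\,Z$ and there is nothing to prove, so I may assume $p(w) \neq 0$. Logarithmic differentiation then gives the identity
\[
\frac{p'(z)}{p(z)} = \sum_{j=1}^{d} \frac{1}{z - z_j},
\]
valid away from the zeros, and since $p'(w)=0$ while $p(w)\neq 0$ we obtain $\sum_{j=1}^{d} (w - z_j)^{-1} = 0$.

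The key step is to clear the denominators by conjugation so as to expose positive real weights. Writing $(w-z_j)^{-1} = \overline{(w-z_j)}/|w-z_j|^2$ and taking complex conjugates of the resulting equation yields
\[
\sum_{j=1}^{d} \frac{w - z_j}{|w - z_j|^2} = 0.
\]
Setting $\mu_j := |w - z_j|^{-2} > 0$, this reads $w\sum_{j}\mu_j = \sum_j \mu_j z_j$, whence
\[
w = \sum_{j=1}^{d} t_j\, z_j, \qquad t_j := \frac{\mu_j}{\sum_{k}\mu_k},
\]
with $t_j > 0$ and $\sum_j t_j = 1$. Thus $w$ is a convex combination of the zeros $z_1,\dots,z_d$ and therefore lies in ${\rm conv}\,Z$, completing the argument.

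I do not expect a genuine obstacle here: the proof is elementary and classical. The only point requiring care is the conjugation trick. Without passing to the conjugate, the coefficients $1/(w-z_j)$ are complex and do not directly exhibit $w$ as a convex combination; it is precisely the real positivity of the weights $\mu_j = |w-z_j|^{-2}$ that forces $w$ into the convex hull. One should also not overlook the degenerate case in which $w$ is itself a (multiple) zero of $p$, handled trivially above.
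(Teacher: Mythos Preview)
Your proof is correct and is the standard classical argument for the Gauss--Lucas theorem via logarithmic differentiation. Note, however, that the paper does not supply its own proof of this statement: it is quoted as a known result with references to \cite{RS}, \cite{CGOT}, \cite{Ba}, and \cite{Ho2}, and is then used as a black box (notably in the proof of Lemma~\ref{lemma: convexity}). So there is no proof in the paper to compare against; what you have written is essentially the argument one would find in those cited sources.
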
   
      
We will also use the following version of Gauss-Lucas theorem, due to W.~P.~Thurston:

\begin{lemma} (\cite{CGOT}, Proposition 1.1) \label{lemma: Thurston}: Let $p$ be any polynomial of degree at least two. Denote by $\mathcal{C}$ the convex hull of the critical points of $p$. Then $p: E \to \mathbb{C}$ is surjective for any closed half-plane $E$ intersecting $\mathcal{C}$.
\end{lemma}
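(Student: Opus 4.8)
The plan is to argue by contradiction, exploiting the fact that every fiber of $p$ is the root set of a polynomial having the same critical points as $p$, and then invoking the Gauss--Lucas theorem stated above. Throughout, write $H := \mathbb{C} \setminus E$, which is an \emph{open} half-plane (because $E$ is closed) and is disjoint from $E$.

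First I would reformulate surjectivity in terms of fibers. The map $p \colon E \to \mathbb{C}$ fails to be onto precisely when there is some $w \in \mathbb{C}$ whose entire preimage $p^{-1}(w)$ misses $E$. Since $p$ is a nonconstant polynomial, it is surjective as a map $\mathbb{C} \to \mathbb{C}$ by the fundamental theorem of algebra; hence for such a $w$ every solution of $p(z) = w$ would lie in the complementary open half-plane $H$. So it suffices to show that no $w$ can have its whole fiber contained in $H$.

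The crux is to connect this fiber to the critical points. The preimages $p^{-1}(w)$ are exactly the roots of $q(z) := p(z) - w$, a polynomial of degree $d \geq 2$ with $q' = p'$; thus $q$ and $p$ have \emph{the same} critical points. If all roots of $q$ lay in $H$, then, $H$ being convex, their convex hull would lie in $H$ as well. By the Gauss--Lucas theorem (stated above), the critical points of $q$—equivalently those of $p$—lie in that convex hull, hence in $H$; taking the convex hull once more yields $\mathcal{C} \subset H$. This contradicts the hypothesis that $E$ meets $\mathcal{C}$, since $H \cap E = \emptyset$ forces $\mathcal{C} \cap E = \emptyset$.

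Putting these together, no $w$ can have its fiber inside $H$, so $p \colon E \to \mathbb{C}$ is surjective. The argument is short and essentially mechanical once the right viewpoint is in place; the only step requiring a moment's thought—and the conceptual heart of the proof—is the observation that shifting the target value by $w$ leaves the critical points fixed, which is precisely what allows the Gauss--Lucas theorem to pin down the location of an \emph{entire} fiber rather than of a fixed root set.
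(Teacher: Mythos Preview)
Your argument is correct. The paper itself does not supply a proof of this lemma; it is quoted as Proposition~1.1 from \cite{CGOT}, so there is no in-paper proof to compare against. For what it is worth, your approach---assume some fiber $p^{-1}(w)$ misses $E$, note that $q(z)=p(z)-w$ has $q'=p'$, and apply Gauss--Lucas to force all critical points of $p$ into the complementary open half-plane---is exactly the standard proof and is the one given in \cite{CGOT}.
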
 


The following consequence of Gauss-Lucas theorem is a modification of Exercise 2.1.15 in \cite{Ho2}:

\begin{lemma}\label{lemma: convexity}
Let  $p(z)=\sum_{j=0}^d a_jz^j$ be a polynomial in $z \in \mathbb{C}$ of degree $d$.  Let $B$ be a closed convex subset of $\mathbb{C}$ containing zeros of $p'$. Then the set $C_B$ of all $w \in \mathbb{C}$ such that all the zeros of $p(\cdot)-w$ are contained in $B$ is a convex set.
\end{lemma}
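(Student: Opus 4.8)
The plan is to reduce the statement to the case of a half-plane and then to study the image of the complementary half-plane under $p$.

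\emph{Reduction to half-planes.} Since $B$ is closed and convex, it is the intersection $B=\bigcap_\alpha H_\alpha$ of all closed half-planes $H_\alpha\supseteq B$. Because $B$ contains every zero of $p'$, so does each $H_\alpha$; writing $\mathcal{C}$ for the convex hull of the zeros of $p'$, this gives $H_\alpha\supseteq B\supseteq\mathcal{C}$ for all $\alpha$. Now $w\in C_B$ exactly when all zeros of $p(\cdot)-w$ lie in $B=\bigcap_\alpha H_\alpha$, which holds iff $w\in C_{H_\alpha}$ for every $\alpha$; hence $C_B=\bigcap_\alpha C_{H_\alpha}$. As an intersection of convex sets is convex, it suffices to prove the lemma when $B=H$ is a single closed half-plane containing all zeros of $p'$.

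\emph{Passing to the image of the complementary half-plane.} After an affine change of variable in the domain (a rotation $z\mapsto e^{i\phi}z$, which moves neither the set $C_H$ in the $w$-plane nor the notion of convexity there), I may assume $H=\{\operatorname{Im} z\le 0\}$, so every critical point of $p$ has nonpositive imaginary part. Setting $U=\{\operatorname{Im} z>0\}$, a point $w$ fails to lie in $C_H$ precisely when $p(\cdot)-w$ has a zero in $U$; thus $C_H=\mathbb{C}\setminus p(U)$, and I must show this complement is convex. I will establish convexity first for the shrunken half-planes $\{\operatorname{Im} z\le\delta\}$ with $\delta>0$ strictly above $\mathcal{C}$, and then recover the case $\delta=0$ from the identity $C_{\{\operatorname{Im}\le 0\}}=\bigcap_{\delta>0}C_{\{\operatorname{Im}\le\delta\}}$, intersecting convex sets.

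\emph{The key computation.} Fix such a $\delta$ and parametrize the line $\ell=\{\operatorname{Im} z=\delta\}$ by $t\mapsto\gamma(t)=p(t+i\delta)$. The tangent direction of the boundary curve $\gamma$ is $\theta(t)=\arg p'(t+i\delta)$, and using the partial-fraction identity $\frac{p''}{p'}(z)=\sum_j\frac{1}{z-c_j}$ over the critical points $c_j$ (counted with multiplicity),
\[
\theta'(t)=\operatorname{Im}\frac{p''}{p'}(t+i\delta)=-\sum_j\frac{\delta-\operatorname{Im} c_j}{\lvert t+i\delta-c_j\rvert^{2}}<0,
\]
since every $c_j$ satisfies $\operatorname{Im} c_j\le 0<\delta$. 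Hence $\theta$ is strictly decreasing: $\gamma$ turns consistently clockwise, with total turning $(d-1)\pi$. This monotone turning is the infinitesimal form of convexity of $\partial p(U)$, and the plan is to convert it into convexity of the complement by showing that $\mathbb{C}\setminus p(\{\operatorname{Im} z\le\delta\})$ equals the intersection of the closed half-planes bounded by the tangent lines $T_t$ of $\gamma$ and lying on its concave side, an intersection of half-planes and therefore convex.

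\emph{The main obstacle.} The hard part is precisely this last conversion. For $d=2$ the curve $\gamma$ is an embedded convex arc (parabola-like) and the claim is transparent. For $d\ge 3$ the total turning exceeds $\pi$ and $\gamma$ generally self-intersects, so $\gamma$ is not the boundary of a convex region in any naive sense, and one must identify which part of $\gamma$ actually bounds the unbounded complementary component. I expect to handle this through the argument principle: the counting function $N(w)=\#\{z\in\{\operatorname{Im} z>\delta\}:p(z)=w\}$ is locally constant off $\gamma$, jumps by $\pm 1$ across $\gamma$ with a sign fixed by the monotone orientation of the tangent, and the strict monotonicity of $\theta$ pins down the set $\{N=0\}$ as the claimed intersection of outer tangent half-planes. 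The remaining technical points—the behavior at infinity, where $\gamma(t)\to\infty$ while $\theta$ sweeps the total angle $(d-1)\pi$, and the degenerate configurations in which $\mathcal{C}$ meets the line $\ell$ (producing cusps of $\gamma$) or is lower-dimensional—are dispatched by the limiting identity $C_{\{\operatorname{Im}\le 0\}}=\bigcap_{\delta>0}C_{\{\operatorname{Im}\le\delta\}}$ together with a routine approximation.
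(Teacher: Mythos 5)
Your reduction to half-planes is valid (indeed $C_B=\bigcap_\alpha C_{H_\alpha}$, and an intersection of convex sets is convex), and your turning computation is correct: with all critical points $c_j$ satisfying $\operatorname{Im} c_j\le 0<\delta$, the partial-fraction identity for $p''/p'$ does give $\theta'(t)=\operatorname{Im}\,\frac{p''}{p'}(t+i\delta)<0$. But the proposal stops exactly where the lemma's actual content begins. The step from ``the image curve $\gamma$ turns monotonically clockwise'' to ``$\mathbb{C}\setminus p(U_\delta)$ is convex'' is not proven: you yourself call it ``the main obstacle'' and offer only a plan (``I expect to handle this through the argument principle''). For $d\ge 3$ the curve $\gamma$ self-intersects, the counting function $N(w)$ takes values $0,1,\dots,d$ in a complicated pattern, and identifying $\{N=0\}$ with an intersection of tangent half-planes of $\gamma$ requires a genuine argument (which arcs of $\gamma$ bound the unbounded component, why no bounded component of $\mathbb{C}\setminus\gamma$ has $N=0$, behavior at infinity). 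None of this is carried out, so what you have is a correct reduction plus a correct infinitesimal observation, not a proof. There is also a slip in the statement of the target set: you write $\mathbb{C}\setminus p(\{\operatorname{Im} z\le\delta\})$, but that set is empty --- by Thurston's surjectivity lemma $p$ maps any closed half-plane meeting the critical hull onto all of $\mathbb{C}$, and $\{\operatorname{Im} z\le\delta\}$ contains every critical point; the set you need is $\mathbb{C}\setminus p(\{\operatorname{Im} z>\delta\})$, consistent with your earlier (correct) identity $C_H=\mathbb{C}\setminus p(U)$.

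For comparison, the paper's proof avoids all analysis of image curves. Given $w_1,w_2\in C_B$ and positive integers $n_1,n_2$, apply the Gauss--Lucas theorem to $P(z)=(p(z)-w_1)^{n_1}(p(z)-w_2)^{n_2}$: all zeros of $P$ lie in $B$, and among the zeros of $P'$ are all zeros of $p(\cdot)-\bigl(\tfrac{n_2}{n_1+n_2}w_1+\tfrac{n_1}{n_1+n_2}w_2\bigr)$, so every rational convex combination of $w_1,w_2$ lies in $C_B$; since $C_B$ is closed (continuity of roots), $C_B$ is convex. If you want to salvage your geometric route, the missing argument-principle analysis would have to be supplied in full; alternatively, note that your half-plane reduction combined with the paper's algebraic trick applied to a single half-plane is redundant --- the trick already handles arbitrary closed convex $B$ in one stroke.
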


\begin{proof} Note that by continuity of roots (see e.g. \cite{Lo}, Section B.5.3)  $C_B$ is closed when $B$ is. Let $w_1,w_2 \in C_B$ and $n_1,n_2 \in \mathbb{N}$ and consider the polynomial (in one complex variable $z$) $P(z):=(p(z)-w_1)^{n_1}(p(z)-w_2)^{n_2}$. Then all zeros of $P$ lie in $B$ (by definition of $C_B$), so the convex hull of zeros of $P$ is contained in $B$. By Gauss-Lucas theorem, all zeros of $P'$ are contained in $B$. The zeros of $P'$ are respectively all the zeros of $p(z)-w_1$, all the zeros of  $p(z)-w_2$ (if $n_1,n_2 >1$), all the zeros  of $p'$ and all the zeros of $p(\cdot)-\left (\frac{n_2}{n_1+n_2}w_1 + \frac{n_1}{n_1+n_2}w_2\right)$. By the definition of $C_B$,  $\frac{n_2}{n_1+n_2}w_1 + \frac{n_1}{n_1+n_2}w_2 \in C_B$. Varying $n_1,n_2$ and using the property that $C_B$ is closed, we get that $tw_1+(1-t)w_2 \in B$ for all $0 \leq t \leq 1$.
\end{proof}

The following simple observation will be also useful:
 
 \begin{lemma} \label{lemma: triangle} Let $K \subset \mathbb{R}^2$ be a convex set containing more than one point. Then $K$ has an empty interior if and only if it is a subset of a line $\{(x,y): ax+by+c=0\}$, where $a^2+b^2\neq 0$.
 \end{lemma}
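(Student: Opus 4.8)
The plan is to prove Lemma \ref{lemma: triangle}, which is an elementary characterization of degenerate (empty-interior) convex sets in the plane. This is a standard and routine result, so the proof should be short and direct. The statement has two directions, and I would prove each separately.

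For the easy direction, suppose $K$ is contained in a line $\ell = \{(x,y): ax+by+c=0\}$ with $a^2+b^2 \neq 0$. Since $\ell$ is a one-dimensional affine subspace of $\mathbb{R}^2$, it has empty interior in $\mathbb{R}^2$ (any disk around a point of $\ell$ contains points off $\ell$). As $K \subset \ell$, the interior of $K$ is contained in the interior of $\ell$, which is empty; hence $K$ has empty interior. This direction requires no convexity at all.

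For the main direction, suppose $K$ is convex, contains more than one point, and I want to show that if $K$ is \emph{not} contained in any line then $K$ has nonempty interior (the contrapositive). Since $K$ has at least two distinct points $P_1, P_2$, and $K$ is not contained in the line through $P_1$ and $P_2$, there is a third point $P_3 \in K$ not on that line. Then $P_1, P_2, P_3$ are affinely independent, so they span a genuine (nondegenerate) triangle. By convexity, $K$ contains all convex combinations of $P_1, P_2, P_3$, i.e.\ the whole closed triangle with these vertices. A nondegenerate triangle has nonempty interior in $\mathbb{R}^2$ (for instance, its barycenter $\frac{1}{3}(P_1+P_2+P_3)$ is an interior point, as one checks directly from affine independence). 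Hence $K$ has nonempty interior.

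The only mild subtlety worth stating carefully is the justification that three affinely independent points produce a set with nonempty interior; this is where the planar dimension is used, and it is the crux of why the lemma holds in $\mathbb{R}^2$. I would either invoke that the convex hull of an affinely independent set is a simplex with nonempty relative interior of full dimension, or argue concretely that the affine map sending the standard basis simplex to the triangle $P_1P_2P_3$ is a linear isomorphism (its matrix $[P_2-P_1 \mid P_3-P_1]$ is invertible by independence), so it carries the open standard simplex onto an open subset of $K$. No genuine obstacle arises; the proof is purely a matter of recording these observations cleanly.
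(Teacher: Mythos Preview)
Your proposal is correct and follows essentially the same approach as the paper's proof: the ``if'' direction is dismissed as obvious, and the ``only if'' direction proceeds by contrapositive, finding three non-collinear points in $K$ and using convexity to conclude $K$ contains a nondegenerate triangle with nonempty interior. Your version is more detailed (explicitly justifying why a line has empty interior and why a nondegenerate triangle has nonempty interior via affine independence), but the underlying argument is identical.
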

 
 \begin{proof} The ``if'' part is obvious. For the ``only if'', assume that $K$ contains three non-collinear points $z_1, z_2, z_3$. Then it also contains the convex hull of $\{z_1, z_2, z_3\}$. This convex hull is the triangle with vertices $z_1, z_2, z_3$, which has non-empty interior.
 \end{proof} 
 
In addition to the above  notion of a convex hull, we will use the notion of a holomorphically convex hull of a compact subset of $\mathbb{C}$ and some of its properties.

  \begin{definition} (\cite{Ho1}, p. 8) Let $\Omega$ be an open set in $\mathbb{C}$. Let $A(\Omega)$ denote the class of holomorphic functions  in $\Omega$. Let $Z$ be an arbitrary compact subset of $\Omega$. Then the holomorphically convex hull $\widehat{Z}$ of $Z$ in $\Omega$ is defined as
  \[ 
  \widehat{Z}=\widehat{Z}_\Omega=\{z \in \Omega: |f(z)| \leq \sup_Z |f| \ \forall f \in A(\Omega)\}.
  \]
  \end{definition}
  
\begin{proposition} \label{prop: hulls} (\cite{Ho1}, p. 8) For $Z$ as above we  have
\[
\widehat{Z} \subset {\rm conv} Z. 
\]
\end{proposition}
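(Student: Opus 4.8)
The plan is to prove the contrapositive: given a point $z_0 \in \Omega \setminus {\rm conv}\, Z$, I will produce a function $f \in A(\Omega)$ with $|f(z_0)| > \sup_Z |f|$, which by the definition of the holomorphically convex hull means precisely that $z_0 \notin \widehat{Z}$. Since every point of $\widehat{Z}$ lies in $\Omega$ by construction, establishing this implication for all such $z_0$ yields $\widehat{Z} \subset {\rm conv}\, Z$.

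First I would observe that $Z$ is compact, so by the earlier proposition on compactness of convex hulls the set ${\rm conv}\, Z$ is compact, hence closed. As $z_0 \notin {\rm conv}\, Z$, the hyperplane separation theorem (Proposition \ref{lemma: separation}) supplies a real affine function $\ell \colon \mathbb{C} \to \mathbb{R}$ with $\ell(z_0) < 0 \leq \ell(z)$ for every $z \in {\rm conv}\, Z$, and in particular for every $z \in Z$. Identifying $\mathbb{C}$ with $\mathbb{R}^2$, every real affine functional may be written as $\ell(z) = \operatorname{Re}(c z) + \gamma$ for a suitable $c \in \mathbb{C}$ and $\gamma \in \mathbb{R}$, so the separating inequality becomes $\operatorname{Re}(c z_0) < -\gamma \leq \operatorname{Re}(c z)$ for all $z \in Z$.

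The decisive step is to convert this real-linear separation into a holomorphic one. I would take the entire function $f(z) = e^{-c z}$, which certainly lies in $A(\Omega)$, and note that $|f(z)| = e^{-\operatorname{Re}(c z)}$. The inequality above gives $-\operatorname{Re}(c z_0) > -\operatorname{Re}(c z)$ for every $z \in Z$, hence $|f(z_0)| > |f(z)|$ pointwise on $Z$. Since $Z$ is compact, $\sup_Z |f|$ is attained at a point of $Z$, and therefore $|f(z_0)| > \sup_Z |f|$, which is exactly what is required.

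The only genuine subtlety is this bridge between convexity and holomorphy: the separation theorem delivers a real-linear functional, whereas $A(\Omega)$ consists of complex-analytic functions. Exponentiating the complex-linear form $c z$ turns the real-affine half-space inequality into a modular inequality $|e^{-c z_0}| > |e^{-c z}|$, and it is this passage that upgrades a purely convex-geometric separation to a holomorphic one. No deeper analytic input is needed, since the exponential is entire and its modulus depends only on the real part of its argument.
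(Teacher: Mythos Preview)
The paper does not supply its own proof of this proposition; it simply quotes the result from H\"ormander. Your argument is correct and is in fact the classical proof: separate $z_0$ from the compact convex set ${\rm conv}\,Z$ by a real affine functional, rewrite it as $\operatorname{Re}(cz)+\gamma$, and exponentiate to obtain the entire function $e^{-cz}\in A(\Omega)$ whose modulus witnesses $z_0\notin\widehat{Z}$.
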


\begin{proposition} \label{prop: Runge} (\cite{Ho1}, Theorem 1.3.3) $\widehat{Z}_\Omega$ 
is 
the 
union 
of 
$Z$ 
and 
the  connected components 
of 
$\Omega \setminus 
Z$ 
which 
are 
relatively  compact 
in 
$\Omega$. 
\end{proposition}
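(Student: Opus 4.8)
The plan is to prove the two inclusions separately, the first by the maximum modulus principle and the second by Runge's approximation theorem.

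For the inclusion $Z \cup \{\text{relatively compact components}\} \subseteq \widehat{Z}$, I would first note that $Z \subseteq \widehat{Z}$ is immediate from the definition. Then, writing $U = \Omega \setminus Z$ and letting $V$ be a connected component of $U$ that is relatively compact in $\Omega$, I would observe that $\overline{V}$ is a compact subset of $\Omega$ whose boundary $\partial V$ is contained in $Z$ (because the components of the open set $U$ are open, so no boundary point of $V$ can lie in $U$, while $\overline{V}\subseteq\Omega$ forces $\partial V \subseteq Z$). For any $f \in A(\Omega)$ the maximum modulus principle then gives $\sup_V |f| = \max_{\partial V} |f| \le \sup_Z |f|$, so every point of $V$ lies in $\widehat{Z}$.

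The substantive direction is $\widehat{Z} \subseteq Z \cup \{\text{relatively compact components}\}$, for which I would fix $z_0 \in \Omega \setminus Z$ lying in a component $V$ of $U$ that is \emph{not} relatively compact in $\Omega$, and construct $f \in A(\Omega)$ with $|f(z_0)| > \sup_Z |f|$. First I would pass to the component $W$ of $\widehat{\mathbb{C}} \setminus Z$ containing $z_0$; a short topological argument shows that if $W$ were contained in $\Omega$ then $W = V$ and $\overline{W}$ (whose boundary lies in $Z$ and which omits $\infty$) would be a compact subset of $\Omega$, contradicting that $V$ is not relatively compact. Hence $W$ contains a point $a \in \widehat{\mathbb{C}} \setminus \Omega$ (possibly $a = \infty$) lying in the same component of $\widehat{\mathbb{C}} \setminus Z$ as $z_0$. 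Since $z_0 \notin Z$, the function $\zeta \mapsto 1/(\zeta - z_0)$ is holomorphic near $Z$, and Runge's theorem (pushing the pole from $z_0$ to $a$ through $W$) yields rational functions $R_k$, holomorphic on $\widehat{\mathbb{C}} \setminus \{a\}$ and hence in $A(\Omega)$, with $R_k \to 1/(\zeta - z_0)$ uniformly on $Z$.

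The key idea---and the step I expect to be the main obstacle, since naive pole-pushing gives no control whatsoever on the value $R_k(z_0)$---is to set $S_k(\zeta) = (\zeta - z_0) R_k(\zeta) - 1$. This lies in $A(\Omega)$; on $Z$ it tends to $0$ uniformly (as $\zeta - z_0$ is bounded there), while $S_k(z_0) = -1$ because the factor $\zeta - z_0$ annihilates the finite value $R_k(z_0)$. For $k$ large, $\sup_Z |S_k| < 1 = |S_k(z_0)|$, so $z_0 \notin \widehat{Z}$. Since $\widehat{Z} \subseteq \Omega$ by definition and $\Omega$ is the disjoint union of $Z$ with the components of $U$, this shows $\widehat{Z}$ meets no non-relatively-compact component, which together with the first inclusion gives the claimed equality. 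As a by-product one may verify, using $f(\zeta) = \zeta$ for boundedness and $f(\zeta) = 1/(\zeta - a)$ with $a \in \partial\Omega$ to keep a positive distance from $\partial\Omega$, that $\widehat{Z}$ is itself compact in $\Omega$.
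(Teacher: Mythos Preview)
The paper does not give its own proof of this proposition; it simply cites H\"ormander's Theorem 1.3.3 and uses the result as a black box. Your argument is correct and is essentially the standard proof one finds in H\"ormander: the easy inclusion via the maximum modulus principle, and the hard inclusion via the pole-pushing form of Runge's theorem together with the trick $S_k(\zeta)=(\zeta-z_0)R_k(\zeta)-1$, which forces $S_k(z_0)=-1$ while $S_k\to 0$ uniformly on $Z$. The topological step locating a point $a\in\widehat{\mathbb{C}}\setminus\Omega$ in the same component of $\widehat{\mathbb{C}}\setminus Z$ as $z_0$ is handled cleanly. The only remark is that your closing ``by-product'' about compactness of $\widehat{Z}$ in $\Omega$ is slightly redundant once the main equality is established (and $\widehat{Z}\subseteq\Omega$ already holds by definition), but it does no harm.
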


\section{Proof of the Theorem}

Our main result is the following:

\begin{theorem} \label{theorem: main} Let $p$ be a complex polynomial of degree $d \geq 2$. Then $p^{-1}(H_p) \subset H_p$.
 \end{theorem}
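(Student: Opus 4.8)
The plan is to reformulate the inclusion $p^{-1}(H_p) \subset H_p$ in terms of the convex set furnished by Lemma \ref{lemma: convexity}. Observe that $p^{-1}(w)$ is precisely the zero set of $p(\cdot)-w$, so if I set $B = H_p$ and write $C_{H_p} = \{w \in \mathbb{C} : p^{-1}(w) \subset H_p\}$, then the desired inclusion $p^{-1}(H_p) \subset H_p$ is literally the statement $H_p \subset C_{H_p}$. Since $H_p = \mathrm{conv}\, J_p$ is the smallest convex set containing $J_p$, it will be enough to prove two things: that $C_{H_p}$ is convex, and that $J_p \subset C_{H_p}$.

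First I would record that $H_p$ is a compact convex subset of $\mathbb{C} \cong \mathbb{R}^2$: it is convex by definition, and compact because $J_p$ is compact (as $\infty \in F_p$) and the convex hull of a compact set is compact. To invoke Lemma \ref{lemma: convexity} with $B = H_p$ and conclude that $C_{H_p}$ is convex, I must verify its hypothesis, namely that $H_p$ contains every zero of $p'$, that is, every critical point of $p$. Granting this for the moment, the convexity of $C_{H_p}$ follows at once. The inclusion $J_p \subset C_{H_p}$ is then an immediate consequence of complete invariance: for $w \in J_p$ we have $p^{-1}(w) \subset p^{-1}(J_p) = J_p \subset H_p$ by Theorem 3.2.4 of \cite{Be}, so $w \in C_{H_p}$. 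Because $C_{H_p}$ is convex and contains $J_p$, it contains $\mathrm{conv}\, J_p = H_p$, which is exactly the assertion $p^{-1}(H_p) \subset H_p$.

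The main obstacle is thus the remaining claim that all critical points lie in $H_p$; equivalently, that the convex hull $\mathcal{C}$ of the critical points of $p$ satisfies $\mathcal{C} \subset H_p$. I would prove this by contradiction, combining Thurston's form of the Gauss--Lucas theorem (Lemma \ref{lemma: Thurston}) with the hyperplane separation theorem (Proposition \ref{lemma: separation}). If some point of $\mathcal{C}$ failed to lie in $H_p$, separation would produce an affine function $f$ with $f < 0$ on an open half-plane containing that point and $f \geq 0$ throughout $H_p$; by shifting the level set slightly I obtain a \emph{closed} half-plane $E$ that still meets $\mathcal{C}$ yet remains disjoint from $H_p$. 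Lemma \ref{lemma: Thurston} then says $p\colon E \to \mathbb{C}$ is surjective, so some $\zeta \in E$ satisfies $p(\zeta) \in J_p$; but then $\zeta \in p^{-1}(J_p) = J_p \subset H_p$, contradicting $E \cap H_p = \emptyset$.

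This forces $\mathcal{C} \subset H_p$ and supplies the missing hypothesis of Lemma \ref{lemma: convexity}, completing the chain of implications. I expect the only delicate point to be the passage from the open half-plane given by Proposition \ref{lemma: separation} to a closed half-plane meeting $\mathcal{C}$ but avoiding $H_p$, which is needed because Lemma \ref{lemma: Thurston} is stated for closed half-planes; everything else is formal bookkeeping built on the complete invariance of $J_p$.
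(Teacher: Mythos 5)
Your proposal is correct and follows essentially the same route as the paper: the paper's Lemma \ref{lemma: allcritpoints} establishes that all critical points lie in $H_p$ via exactly your separation-plus-Thurston contradiction, and the paper's proof of Theorem \ref{theorem: main} then applies Lemma \ref{lemma: convexity} with $B=H_p$, uses complete invariance of $J_p$ to get $J_p \subset C_p$, and concludes $H_p \subset C_p$ by minimality of the convex hull. Your explicit handling of the open-versus-closed half-plane issue (shifting the level set) is a cleaner rendering of what the paper compresses into ``applied twice if necessary.''
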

 
 We will use the Lemma below, a consequence of Lemma \ref{lemma: Thurston}:

\begin{lemma} \label{lemma: allcritpoints} Let $p$ be any polynomial of degree at least two. Then all zeros of $p'$ belong to $H_p={\rm conv}J_p$, the convex hull of the Julia set $J_p$ of $p$. 
\end{lemma}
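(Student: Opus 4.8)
The plan is to argue by contradiction, combining the hyperplane separation theorem with Thurston's surjectivity lemma and the complete invariance of the Julia set. First I would record that $J_p$ is compact: it is closed, being the complement of the open Fatou set, and bounded, since $J_p \subset K_p \subset D_R$. Consequently $H_p = {\rm conv}\, J_p$ is a closed, convex, compact subset of $\mathbb{C}$, so the separation theorem applies to it. Suppose, for contradiction, that some zero $c$ of $p'$ fails to lie in $H_p$. Being a critical point, $c$ belongs to $\mathcal{C}$, the convex hull of the critical points. Applying Proposition \ref{lemma: separation} to the closed convex set $H_p$ and the point $c \notin H_p$, I obtain an affine function $f : \mathbb{C} \to \mathbb{R}$ with $f(c) < 0 \leq f(z)$ for all $z \in H_p$; in particular $J_p \subset H_p \subset \{f \geq 0\}$.

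Next I would exploit the \emph{strict} inequality $f(c) < 0$ by choosing a real number $t$ with $f(c) < t < 0$ and setting $E = \{z \in \mathbb{C} : f(z) \leq t\}$. This is a closed half-plane, and since $f(c) \leq t$ we have $c \in E$, so $E$ meets $\mathcal{C}$. Lemma \ref{lemma: Thurston} (Thurston's form of Gauss--Lucas) then yields that $p : E \to \mathbb{C}$ is surjective, and in particular $J_p \subset p(E)$.

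Finally I would close the argument using complete invariance of $J_p$. Given any $w \in J_p$, surjectivity provides some $z \in E$ with $p(z) = w$; since $p^{-1}(J_p) = J_p$, this preimage $z$ lies in $J_p$, so $J_p \cap E \neq \emptyset$. But $z \in J_p$ forces $f(z) \geq 0$, while $z \in E$ forces $f(z) \leq t < 0$, which is impossible. Hence no zero of $p'$ can lie outside $H_p$, and the lemma follows.

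I expect the only delicate point to be the choice of the strict threshold $t < 0$. Taking instead the half-plane whose boundary runs along the separating line $\{f = 0\}$ would only produce a point of $J_p$ on that line, giving no contradiction; it is precisely the strict separation $f(c) < 0$ that lets me slide the half-plane $E$ entirely into $\{f < 0\}$ while still capturing the critical point $c$, thereby forcing the contradiction with $J_p \subset \{f \geq 0\}$.
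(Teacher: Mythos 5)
Your proof is correct and follows essentially the same route as the paper's own: separate the offending critical point from the closed convex set $H_p$, apply Lemma \ref{lemma: Thurston} to a closed half-plane containing that critical point (hence meeting $\mathcal{C}$) and disjoint from $J_p$, and use the complete invariance $p^{-1}(J_p)=J_p$ to reach a contradiction. Your explicit choice of a threshold $t$ with $f(c)<t<0$ is simply a cleaner rendering of the step the paper compresses into the phrase ``applied twice if necessary,'' ensuring the closed half-plane lies strictly inside $\{f<0\}$ and so misses $J_p$ entirely.
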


\begin{proof} Suppose there is an $x_0 \not \in H_p$ such that $p'(x_0)=0$. By Proposition \ref{lemma: separation} (applied twice if necessary), there exists a closed half-plane $E$ such that $x_0 \in E$ and $E \cap J_p = \emptyset$. By Lemma \ref{lemma: Thurston}, $p: E \to \mathbb{C}$ is surjective. Take a $z_0 \in J_p$. Then on one hand $p^{-1}(z_0) \subset J_p$, while on the other hand $p^{-1}(z_0) \cap E \neq \emptyset$, a contradiction. 
\end{proof}

\begin{remark} For the quadratic family $p_c(z)=z^2+c, \ c \in \mathbb{C}$, which gave motivation for \cite{Al},    it is easy to check directly (without appealing to Lemma \ref{lemma: allcritpoints}) that the critical point $0$ is the center of symmetry of the Julia set $J_c$, so it is a convex combination of two points in $J_c$.
\end{remark}

 \begin{proof} (of Theorem \ref{theorem: main}) 
  By Lemma \ref{lemma: allcritpoints} $B=H_p$ satisfies the assumptions of Lemma \ref{lemma: convexity}. Applying  Lemma \ref{lemma: convexity} to   $H_p$  we get that the set $C_p=\{ w \in \mathbb{C}: p^{-1}(w) \in H_p\}$ is convex. Furthermore, for $w \in J_p$ we have $p^{-1}(w) \in J_p \subset H_p$, so $J_p \subset C_p$. Hence $H_p \subset C_p$, which implies $p^{-1}(H_p) \subset H_p$. 
\end{proof}

To characterize polynomials $p$ with $H_p=p^{-1}(H_p)$, we first prove the following two lemmas:



\begin{lemma} For every complex polynomial $p$ of degree $d \geq 2$, $K_p \subset H_p$. 
\end{lemma}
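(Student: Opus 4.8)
The plan is to realize the filled-in Julia set $K_p$ as a holomorphically convex hull and then apply the two propositions about such hulls from the previous section. Concretely, I would take $Z=J_p$ and $\Omega=\mathbb{C}$. Since $\infty\in F_p$ and $J_p\subset K_p\subset D_R$, the Julia set $J_p$ is a compact subset of $\mathbb{C}$, so the holomorphically convex hull $\widehat{J_p}$ of $J_p$ in $\mathbb{C}$ is defined.

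The first step is to compute $\widehat{J_p}$ via Proposition \ref{prop: Runge}: it equals the union of $J_p$ with those connected components of $\mathbb{C}\setminus J_p$ that are relatively compact (equivalently, bounded) in $\mathbb{C}$. The key identification, and the one point that needs a short argument, is that these bounded components are precisely the bounded Fatou components of $p$. Indeed, $\mathbb{C}\setminus J_p=F_p\setminus\{\infty\}$, and since $F_\infty$ is open and contains $\infty$, any Fatou component other than $F_\infty$ cannot accumulate at $\infty$ and is therefore bounded, while $F_\infty\setminus\{\infty\}$ is connected and unbounded. Hence the unbounded component of $\mathbb{C}\setminus J_p$ is exactly $F_\infty\setminus\{\infty\}$, and the bounded ones are exactly the bounded components of $F_p$. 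Recalling from the introduction that $K_p$ is the union of $J_p$ with the bounded components of $F_p$, this yields $\widehat{J_p}=K_p$.

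With this identification in hand, the conclusion is immediate from Proposition \ref{prop: hulls}, which gives $\widehat{J_p}\subset{\rm conv}\,J_p=H_p$. Combining the two relations produces $K_p=\widehat{J_p}\subset H_p$, as desired.

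I expect the main (and essentially the only) obstacle to be the bookkeeping in the middle step: matching the purely topological description of $\widehat{J_p}$ supplied by Proposition \ref{prop: Runge} with the dynamical description of $K_p$. This requires checking that deleting the single point $\infty$ does not disconnect $F_\infty$ and that no Fatou component other than $F_\infty$ is unbounded; both follow from $\infty$ lying in the open set $F_\infty$ together with the fact that a point cannot disconnect an open connected subset of a surface.
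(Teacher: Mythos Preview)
Your proof is correct and follows exactly the paper's approach: identify $K_p$ with the holomorphically convex hull $\widehat{J_p}$ in $\mathbb{C}$ via Proposition~\ref{prop: Runge}, then apply Proposition~\ref{prop: hulls}. The only difference is that you spell out the topological bookkeeping for the identification $\widehat{J_p}=K_p$, which the paper states without justification.
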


\begin{proof}
Note that by Proposition \ref{prop: Runge} $K_p$ is the holomorphically convex hull of $J_p$ in $\Omega=\mathbb{C}$. By Proposition \ref{prop: hulls}, the holomorphically convex hull  of a compact set in $\mathbb{C}$ is a subset of the  convex hull of this compact, which concludes the proof.
\end{proof}

\begin{lemma} Let $p$ be a complex polynomial of degree $d \geq 2$. If $H_p=p^{-1}(H_p)$, then $H_p \subset  K_p$.
\end{lemma}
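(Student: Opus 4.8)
The plan is to extract, from the hypothesis $H_p = p^{-1}(H_p)$, the \emph{forward} invariance of $H_p$, and then to use the boundedness of $H_p$ to conclude that every point of $H_p$ has a bounded forward orbit and hence lies in $K_p$.

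First I would unpack the equality. By Theorem \ref{theorem: main} the inclusion $p^{-1}(H_p) \subset H_p$ holds unconditionally, so the real content of the hypothesis is the reverse inclusion $H_p \subset p^{-1}(H_p)$. Since $p^{-1}(H_p) = \{z : p(z) \in H_p\}$, this reverse inclusion says precisely that $p(z) \in H_p$ for every $z \in H_p$; that is, $H_p$ is forward invariant, $p(H_p) \subset H_p$.

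Next I would record that $H_p$ is compact. The Julia set $J_p$ is closed and bounded: it is the boundary $\partial K_p$ of $K_p$, and $K_p = \bigcap_{k \geq k_0} p^{-k}(D_R)$ is a bounded closed set. Since the convex hull of a compact subset of $\mathbb{R}^2 = \mathbb{C}$ is again compact, $H_p = {\rm conv}\, J_p$ is compact, and in particular bounded. Now fix $z \in H_p$. Iterating the forward invariance gives $p^{\circ n}(z) \in H_p$ for every $n \geq 0$, so the orbit $\{p^{\circ n}(z)\}$ lies in the bounded set $H_p$ and is therefore bounded. By the defining property $K_p = \{z \in \mathbb{C}: \{p^{\circ n}(z)\} \text{ is bounded}\}$ we conclude $z \in K_p$, and since $z$ was arbitrary, $H_p \subset K_p$.

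I do not expect a serious obstacle here: the entire argument hinges on reading the hypothesis as forward invariance, after which the conclusion is immediate from the compactness of $H_p$ and the definition of the filled-in Julia set. The only point requiring a little care is the direction of the inclusions — making sure it is $H_p \subset p^{-1}(H_p)$ (and not the opposite inclusion, which is already guaranteed by Theorem \ref{theorem: main}) that encodes $p(H_p) \subset H_p$.
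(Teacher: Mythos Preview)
Your proof is correct and follows essentially the same approach as the paper: from $H_p=p^{-1}(H_p)$ one deduces that every forward orbit starting in $H_p$ stays in $H_p$, and since $H_p$ is bounded this forces $H_p\subset K_p$. The paper packages the forward invariance as $H_p=\bigcap_{n\ge 0}p^{-n}(H_p)$ rather than $p(H_p)\subset H_p$, but the content is the same.
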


\begin{proof} If $H_p=p^{-1}(H_p)$, then $H_p = \bigcap_{n \geq 0} p^{-n}(H_p)$. It follows that for every $z \in H_p$ the orbit $\{p^{\circ n}(z): n \in \mathbb{N}\}$ is contained in $H_p$, and hence bounded. Therefore $H_p \subset  K_p$.
\end{proof}

Now we consider two cases: $J_p = K_p$ or $J_p \subsetneq K_p$.

\begin{theorem} \label{theorem: conjcheb}  Let $p$ be a complex polynomial of degree $d \geq 2$ such that $H_p=p^{-1}(H_p)=J_p$. Then $p$ is affinely conjugated to the Chebyshev polynomial $T_d$ or to $-T_d$.
\end{theorem}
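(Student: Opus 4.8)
The plan is to exploit the hypothesis $H_p = J_p$, which forces the Julia set to be convex, together with the structural facts about polynomial Julia sets recalled in the Introduction. First I would note that $J_p$ is an infinite (by Theorem 4.2.1 in \cite{Be}) compact set with empty interior, and that the assumption now makes it convex. Since $J_p$ contains more than one point, Lemma \ref{lemma: triangle} applies and shows that $J_p$, being a convex planar set with more than one point and empty interior, is contained in a line. Being in addition compact and convex, $J_p$ is therefore a nondegenerate closed line segment with some endpoints $u \neq v$.

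Next I would normalize this segment by an affine map. Taking $g(z) = az + b$ with $a = 2/(v-u)$ and $b = -(u+v)/(v-u)$ gives $g(u) = -1$ and $g(v) = 1$, so $g(J_p) = [-1,1]$. Setting $s = g \circ p \circ g^{-1}$, an affine (hence degree-preserving) conjugate of $p$, and using that $g$ is in particular a M\"obius map, Proposition \ref{prop: moebius} yields $J_s = g(J_p) = [-1,1]$.

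It then remains to invoke complete invariance of the Julia set. Since $J_s$ is completely invariant under $s$, we have $s^{-1}(J_s) = J_s = s(J_s)$, that is $s^{-1}([-1,1]) = [-1,1] = s([-1,1])$. Thus $[-1,1]$ is both forward and backward invariant under $s$, and Proposition \ref{prop: cheb} gives $s = T_d$ or $s = -T_d$. Consequently $p = g^{-1} \circ s \circ g$ is affinely conjugated to $T_d$ or to $-T_d$, as claimed.

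The argument is essentially a clean chain of reductions, so I do not anticipate a serious obstacle; the only points needing care are verifying that $J_p$ is a genuinely nondegenerate segment rather than a single point (secured by the infinitude of $J_p$) and that the passage to $[-1,1]$ is carried out by an affine map, which is simultaneously M\"obius (so that Proposition \ref{prop: moebius} applies) and convexity-preserving. I would also remark that the full hypothesis $p^{-1}(H_p)=H_p$ is stronger than needed here: the entire force of the proof comes from the convexity of $J_p$ encoded in $H_p = J_p$, combined with the complete invariance of $J_p$.
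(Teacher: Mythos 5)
Your proof is correct and follows essentially the same route as the paper: empty interior plus convexity forces $J_p$ into a line via Lemma \ref{lemma: triangle}, compactness makes it a segment, an affine conjugation sends it to $[-1,1]$, and Proposition \ref{prop: cheb} finishes. The only differences are cosmetic---you make explicit the invocations of Proposition \ref{prop: moebius} and of complete invariance that the paper leaves implicit, and your closing remark that $H_p=J_p$ alone suffices (since $p^{-1}(J_p)=J_p$ automatically) is accurate.
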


\begin{proof} Recall that for any polynomial $p$ the Julia set $J_p$ has empty interior. If $J_p=H_p$, then $J_p$ is an infinite closed convex set in $\mathbb{C}$ with empty interior, and hence, by Lemma \ref{lemma: triangle}, is a subset of a line.  Being connected and compact, $J_p$ must be a (closed) segment. An affine map $g(z)=az+b$  transforms $J_p$ into $[-1,1]$, which is the Julia set of $g\circ p \circ g^{-1}$. By Proposition \ref{prop: cheb}, $g\circ p \circ g^{-1}$ is equal to the Chebyshev polynomial $T_d$ or to $-T_d$.
\end{proof}

\begin{theorem} \label{theorem: conjpower}  Let $p$ be a complex polynomial of degree $d \geq 2$ such that $H_p=p^{-1}(H_p)=K_p$ has nonempty interior. Then $p$ is affinely conjugated to a monomial  $c z^d$ with $|c|=1$.
\end{theorem}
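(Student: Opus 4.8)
The plan is to reduce the statement to the assertion that $J_p$ is a round circle, and then to invoke Proposition~\ref{prop: powers}. Indeed, if $J_p$ is a circle of radius $r$ about a point $z_0$, then the affine map $g(z)=(z-z_0)/r$ is a M\"obius transformation, so by Proposition~\ref{prop: moebius} the Julia set of $q=g\circ p\circ g^{-1}$ is $g(J_p)=\{|z|=1\}$. Being a Julia set, this unit circle is completely invariant under $q$, hence both forward and backward invariant, and Proposition~\ref{prop: powers} then yields $q(z)=cz^d$ with $|c|=1$. Thus everything reduces to showing that the convex Jordan curve $J_p=\partial K_p$ is a circle.

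First I would extract the dynamical structure forced by convexity. Since $H_p=K_p$ is convex with nonempty interior, its interior $U$ is a bounded convex (hence simply connected) domain, and from $J_p=\partial K_p$ together with the complete invariances $p^{-1}(K_p)=K_p$ and $p^{-1}(J_p)=J_p$ one gets $p^{-1}(U)=U$; thus $U$ is a completely invariant bounded Fatou component, so that $\widehat{\mathbb C}$ splits as $U\sqcup J_p\sqcup F_\infty$ with both $U$ and $F_\infty$ completely invariant and $J_p$ their common boundary (a Jordan curve, consistent with Proposition~\ref{prop: boundary}). The restriction $p\colon U\to U$ is then a proper holomorphic map of degree $d$, and Riemann--Hurwitz forces all $d-1$ critical points of $p$ to lie in $U$; this refines Lemma~\ref{lemma: allcritpoints} and shows in particular that $p'$ has no zero on $J_p$. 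Moreover $U$ is the immediate basin of an attracting fixed point $\alpha\in U$.

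Next I would use the exterior uniformization. Normalising $p$ to be monic by an affine conjugacy, let $\psi$ be the B\"ottcher map carrying $\{|w|>1\}$ conformally onto $\widehat{\mathbb C}\setminus K_p$ with $\psi(w)=w+c_0+\sum_{n\ge1}c_nw^{-n}$ and $\psi(w^d)=p(\psi(w))$. Convexity of $K_p$ is equivalent to the analytic condition $\operatorname{Re}\bigl(1+w\psi''(w)/\psi'(w)\bigr)\ge0$ on the boundary circle, and since this expression is holomorphic on $\{|w|>1\}$ with value $1$ at $\infty$, the minimum principle propagates the inequality to all $|w|\ge1$; equivalently, every level curve $\psi(\{|w|=r\})$, $r\ge1$, is convex, and $p$ carries $\psi(\{|w|=r\})$ onto $\psi(\{|w|=r^d\})$. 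Writing $w=e^{i\theta}$ and $\gamma(\theta)=\psi(e^{i\theta})$, convexity says that the turning rate $\kappa(\theta)=\operatorname{Re}\bigl(1+w\psi''/\psi'\bigr)$ of the tangent is nonnegative, with $\int_0^{2\pi}\kappa=2\pi$, and differentiating $p(\gamma(\theta))=\gamma(d\theta)$ yields the self-similar identity $d\,\kappa(d\theta)=\kappa(\theta)+\sum_{j=1}^{d-1}\beta_j(\theta)$, where $\beta_j(\theta)=\tfrac{d}{d\theta}\arg(\gamma(\theta)-c_j)$ is the angular speed of $\gamma$ about the enclosed critical point $c_j$; because each $c_j$ is interior to the convex curve, $\beta_j>0$ and $\int_0^{2\pi}\beta_j=2\pi$.

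The hard part will be to deduce from this identity, together with $\kappa\ge0$, that $\kappa\equiv1$. Evaluating the identity at a minimum of $\kappa$ already gives $\kappa>0$ (strict convexity, so $J_p$ has no corners), and I expect the self-similar relation, analyzed through the forward iteration $\kappa(d^n\theta)=d^{-n}\kappa(\theta)+\sum_{k<n}d^{k-n}B(d^k\theta)$ with $B=\sum_j\beta_j$, to pin $\kappa$ down to the constant $1$. Once $\kappa\equiv1$, the tangent argument $\arg\psi'(e^{i\theta})$ is constant on the circle, so the harmonic function $\operatorname{Im}\log\psi'$ vanishes on $\{|w|=1\}$ and at $\infty$, hence identically; thus $\psi'\equiv1$, $\psi(w)=w+c_0$, and $K_p$ is the closed disk of radius $1$ about $c_0$, so $J_p$ is a circle and the reduction above finishes the proof. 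I regard the rigidity step $\kappa\equiv1$ as the essential difficulty, and I note that convexity cannot be dropped: the maps $z^2+c$ with $c$ in the main cardioid also have two completely invariant Fatou components and a Jordan-curve Julia set, yet are not conjugate to a monomial --- precisely because their Julia sets fail to be convex.
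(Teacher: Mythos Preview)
Your approach diverges from the paper's and contains a genuine gap. You correctly reduce the problem to showing that the convex Jordan curve $J_p=\partial H_p$ is a Euclidean circle, and your derivation of the self-similar identity $d\,\kappa(d\theta)=\kappa(\theta)+\sum_j\beta_j(\theta)$ via the B\"ottcher conjugacy is formally sound (modulo justifying that $\psi$ extends to the unit circle with enough regularity for $\kappa$ and the $\beta_j$ to be defined pointwise---convexity of $K_p$ gives rectifiability of $J_p$ and hence $\psi'\in H^1$, but not obviously a $C^1$ extension). The real problem is the step you yourself label ``the essential difficulty'': deducing $\kappa\equiv 1$ from the identity. You offer only the expectation that forward iteration will ``pin $\kappa$ down to the constant $1$'', and no argument is given. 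The rigidity you need here is essentially the statement that a polynomial Julia set which is a rectifiable Jordan curve must be a round circle, a nontrivial theorem (in the circle of results due to Bowen, Sullivan, Zdunik, Hamilton) that does not fall out of a short computation with the functional equation.

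The paper's proof avoids this difficulty entirely by invoking precisely such a rigidity result as a black box. It observes that $J_p=\partial H_p$ is a Jordan curve (Proposition~\ref{prop: boundary}), that the boundary of a planar convex body is rectifiable and hence has Hausdorff dimension $1$, and then applies Hamilton's theorem (Proposition~\ref{prop: rect}): a Jordan-curve Julia set either has Hausdorff dimension strictly greater than $1$ or is a circle or a line. Since $J_p$ is bounded with dimension $1$, it is a circle, and Proposition~\ref{prop: powers} finishes as in your first paragraph. Your B\"ottcher-coordinate framework is an interesting attempt at a self-contained argument, but as written it does not close; to make it work you would effectively have to reprove Hamilton's theorem in this special case.
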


The proof of Theorem \ref{theorem: conjpower} relies on properties of  Hausdorff measure and Hausdorff dimension (henceforth denoted by ${\rm dim}$). We will not define these notions here. For detailed account we refer to \cite{Fa}. Applications to Julia sets can be also found in Chapter 10 in \cite{Be}. The following result is key:

\begin{proposition} \label{prop: rect} (Theorem 1, \cite{Ha}): Let $f :\widehat{\mathbb{C}} \to \widehat{\mathbb{C}}$ be a rational function. Suppose
that the Julia set $J_f$ is a Jordan curve. Then ${\rm dim} (J_f) > 1$ or $J_f$ is a circle/line. 
\end{proposition}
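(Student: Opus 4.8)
The plan is to prove the boundary case of the dichotomy: since a Jordan curve always satisfies ${\rm dim}(J_f)\ge 1$, the only alternative to ${\rm dim}(J_f)>1$ is ${\rm dim}(J_f)=1$, so I would assume ${\rm dim}(J_f)=1$ and show that $J_f$ is a circle or a line. As $J_f$ is a Jordan curve, it separates $\widehat{\mathbb{C}}$ into two complementary Jordan domains $\Omega_0,\Omega_1$, each simply connected, each carrying a harmonic measure $\omega_0,\omega_1$ supported on $J_f$. By Proposition \ref{prop: moebius} the entire configuration is invariant under M\"obius conjugacy, and since circles and lines are precisely the M\"obius images of the unit circle, it suffices to reach the conclusion up to such a conjugacy. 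I would record two structural facts for later use: repelling periodic points are dense in $J_f$, and at such a point $z_0$ (of period $n$, multiplier $\lambda$, $|\lambda|>1$) the Koenigs linearization $\phi\circ f^{\circ n}=\lambda\phi$ exhibits $J_f$ near $z_0$ as a Jordan arc $\Gamma$ through $0$ with $\lambda\Gamma=\Gamma$, so that $J_f$ is self-similar under scaling by $\lambda$ in a conformal coordinate.

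The analytic core is to convert the dimension hypothesis into rectifiability. First I would invoke the Jones--Wolff theorem (planar harmonic measure satisfies ${\rm dim}\,\omega\le 1$) together with Makarov's theorem (on a simply connected domain harmonic measure has dimension exactly $1$), giving ${\rm dim}\,\omega_0={\rm dim}\,\omega_1=1$; under our hypothesis this equals ${\rm dim}(J_f)$, so both harmonic measures have full dimension on the curve. I would then feed this into the two-sided dichotomy of Bishop--Carleson--Garnett--Jones for a shared boundary: along $J_f$ either $\omega_0\perp\omega_1$, or $J_f$ is rectifiable and possesses a tangent line at $\omega_i$-almost every point of the set where $\omega_0\sim\omega_1$. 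The role of the dynamics is to exclude the mutually singular alternative: the balanced (maximal entropy) measure $\mu_f$ is comparable to both $\omega_i$ through the self-similar repetition of the local model at the dense set of repelling cycles, and mutual singularity would force the curve to be genuinely thicker than a dimension-one set. Hence $J_f$ has a tangent line on a set of positive harmonic measure, and the conformal self-similarity of the first paragraph propagates tangents to a dense --- and then, by complete invariance, to every --- scale.

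Finally I would run the rigidity step. I stress that local self-similarity alone is \emph{not} enough: a $\lambda$-invariant Jordan arc can be a non-straight piecewise-smooth curve of dimension one, so straightness cannot be read off a single linearization. Once tangents exist, however, $\lambda$ must preserve the tangent direction at each repelling cycle, which forces $\lambda\in\mathbb{R}_{>0}$, rules out the logarithmic-spiral local model, and makes the arc $\Gamma$ straight in the Koenigs coordinate; a Schwarz-reflection argument across $J_f$ then upgrades it to a real-analytic $f$-invariant Jordan curve, preserved by an anticonformal involution commuting with $f$. This identifies $f$, up to conjugacy, with a map whose Julia set is the unit circle, and by the rigidity recorded in Proposition \ref{prop: powers} it pins $J_f$ to a circle or line. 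The main obstacle is exactly the passage from ${\rm dim}(J_f)=1$ to rectifiability with tangents: this is the deep potential-theoretic heart, combining Makarov, Jones--Wolff, the Bishop--Carleson--Garnett--Jones dichotomy, and the ergodic comparison of $\mu_f$ with the two harmonic measures. An alternative packaging avoids the two-sided analysis by invoking the dimension-rigidity theorem of Zdunik and Przytycki--Urba\'nski--Zdunik --- equality ${\rm dim}\,\omega_i={\rm dim}(J_f)$ forces $f$ to be conjugate to $z^{\pm d}$, to $\pm T_d$, or to a Latt\`es example --- after which the Jordan-curve hypothesis discards the Latt\`es case ($J_f=\widehat{\mathbb{C}}$) and the Chebyshev case ($J_f$ an arc, not a closed curve), once more leaving the circle; this route, however, rests on equally deep thermodynamic formalism.
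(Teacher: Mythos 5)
The paper does not prove Proposition \ref{prop: rect} at all: it is imported verbatim as Theorem~1 of Hamilton \cite{Ha} and used as a black box in the proof of Theorem \ref{theorem: conjpower}. So the only fair comparison is with Hamilton's published argument, and at the level of ingredients your outline correctly locates where the depth lies --- Makarov's theorem, the Bishop--Carleson--Garnett--Jones dichotomy for the two-sided harmonic measures, and Zdunik/Przytycki--Urba\'nski--Zdunik rigidity are indeed the tools of this circle of results, with your closing ``alternative packaging'' the closest to a genuinely workable route. (One small economy: since both complementary domains are Jordan, hence simply connected, Makarov alone gives ${\rm dim}\,\omega_i=1$; Jones--Wolff is redundant.)

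As a proof, however, the proposal has a genuine gap at its decisive step, the exclusion of $\omega_0\perp\omega_1$. The assertion that the maximal entropy measure $\mu_f$ is ``comparable to both $\omega_i$ through the self-similar repetition of the local model'' is not an argument; relating $\mu_f$ to the two harmonic measures is essentially the entire content of the theorem. Worse, the fallback principle you lean on --- that mutual singularity ``would force the curve to be genuinely thicker than a dimension-one set'' --- is false for general Jordan curves: there exist nowhere-tangent Jordan curves of Hausdorff dimension exactly $1$ (e.g. built from graphs of Zygmund-class functions), and on such curves the Bishop--Carleson--Garnett--Jones theorem gives $\omega_0\perp\omega_1$; so singularity of the two harmonic measures does not imply ${\rm dim}>1$, and the dynamical mechanism that actually replaces it in the literature --- Zdunik-type law-of-the-iterated-logarithm fluctuation estimates for harmonic measure, which force singularity with respect to dimension-one gauges unless the boundary is real-analytic --- appears nowhere in your sketch. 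Two further slips: $f$ may interchange the two complementary domains, so one must pass to $f^{\circ 2}$ before $\omega_0,\omega_1$ (based at fixed points, whose existence requires excluding rotation domains and handling the parabolic case) become invariant; and your final appeal to Proposition \ref{prop: powers} is misapplied, since at that stage you have only an $f$-invariant real-analytic Jordan curve, not an invariant unit circle --- what is needed is Fatou's classical theorem that a Julia set lying on a smooth curve is a circle, line, or arc thereof, after which an affine change and Proposition \ref{prop: moebius} finish. The Zdunik--Przytycki--Urba\'nski--Zdunik packaging in your last sentence is the viable repair, but even it requires first identifying $\mu_f$ with harmonic measure (a Brolin--Lyubich-type statement for a completely invariant simply connected Fatou component of $f^{\circ 2}$) before ${\rm dim}\,\mu_f=1$ can be asserted and the rigidity theorem applied.
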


\begin{proof} (of Theorem \ref{theorem: conjpower}) By Proposition \ref{prop: boundary}, $J_p=\partial K_p = \partial H_p$ is a (closed) Jordan curve and the (only) connected boundary component of the convex domain ${\rm int} \ K_p$. 
By Problem 1.5.1 in \cite{To}, every such curve is  rectifiable, that is, it has positive and finite length. By Lemma 3.2 in \cite{Fa}, the Hausdorff dimension of  a rectifiable curve equals $1$ (this consequence is explicitly stated on p. 30 in \cite{Fa}, after the proof of the Lemma 3.2). Because $\infty$ is in the Fatou set of $p$, Proposition \ref{prop: rect} implies that $J_p$ is a Euclidean circle. Transforming $J_p$ by an affine map $g$ into the unit circle and applying Proposition \ref{prop: powers} we conclude that $p(z)=g^{-1}(c (g(z))^d)$, where $|c|=1$. 
\end{proof}

\end{document}